\documentclass[letterpaper,10pt,conference]{IEEEtran}

\IEEEoverridecommandlockouts
\usepackage{cite}
\usepackage{amsmath,amsthm,amssymb,array,cases,mathrsfs,subcaption,here,enumitem,ascmac,url,cite,float,comment,color}
\usepackage{algorithmic}
\usepackage{graphicx}
\usepackage{textcomp}
\usepackage{xcolor}

\theoremstyle{plain}
\newtheorem{theorem}{Theorem}
\newtheorem{lemma}{Lemma}
\newtheorem{proposition}{Proposition}
\newtheorem{corollary}{Corollary}
\newtheorem{remark}{Remark}
\newtheorem{assumption}{Assumption}

\theoremstyle{definition}
\newtheorem{definition}{Definition}

\usepackage{hyperref}

\begin{document}

\title{\LARGE
Global Linearization of Parameterized Nonlinear Systems \\ with Stable Equilibrium Point Using the Koopman Operator \\
\thanks{The work was partially supported by JSPS KAKENHI (Grant No. 23K03914), JSPS Bilateral Collaborations (Grant No. JPJSBP120242202), and JST BOOST (Grant No. JPMJBS2407). 
}
}

\author{Natsuki~Katayama$^1$\thanks{$^1$Natsuki Katayama and Yoshihiko Susuki are with the Department of Electrical Engineering, Kyoto University, Katsura, Nishikyo-ku, Kyoto 615-8510, Japan. E-mails: \texttt{n-katayama@dove.kuee.kyoto-u.ac.jp}, \texttt{susuki.yoshihiko.5c@kyoto-u.ac.jp}}, Alexandre Mauroy$^2$\thanks{$^2$Alexandre Mauroy is with the Department of Mathematics, University of Namur, 5000, Belgium. E-mails: \texttt{alexandre.mauroy@unamur.be}}, and Yoshihiko~Susuki$^1$}

\maketitle

\begin{abstract}
The Koopman operator framework enables global analysis of nonlinear systems through its inherent linearity.
This study aims to clarify spectral properties of the Koopman operators for nonlinear systems with control inputs. 
To this end, we treat the inputs as parameters throughout this paper. 
We then introduce the Koopman operator for a parameterized dynamical system with a globally exponentially stable equilibrium point and analyze how eigenfunctions of the operator depend on the parameter. 
As a main result, we obtain a global linearization, which enables one to transform the nonlinear system into a finite-dimensional linear system, and we show that it depends continuously on the parameter. 
Subsequently, for a control-affine system, we investigate a condition under which the transformation providing a global bilinearization does not depend on the parameter. 
This provides the condition under which the global bilinearization for the control-affine system is independent of the parameter.  
\end{abstract}

\begin{IEEEkeywords}
Koopman Operator, Nonlinear System, Global linearization, Bilinearization
\end{IEEEkeywords}
\IEEEpeerreviewmaketitle

%%%%%
\section{Introduction}
\label{sec:1}
The Koopman operator has gained significant attention in recent years for the analysis and control of nonlinear systems \cite{mauroy2020koopman}. 
Originally introduced for finite-dimensional nonlinear autonomous systems, the Koopman operator is a linear operator acting on a Banach space.
The dynamics on the finite-dimensional state space can thus be equivalently described by the action of the Koopman operator on this (infinite-dimensional) Banach space, allowing the nonlinear state dynamics %global analysis 
using the linear %system 
theory for identification and prediction.
In particular, recent intensive studies (e.g., \cite{brunton2016koopman}) have developed data-driven algorithms to learn the Koopman operator, such as Dynamic Mode Decomposition (DMD) and its variants. 
The so-called Extended DMD (EDMD) \cite{williams2015data} can be applied even to complex, black box models when sufficient data are available. 
Furthermore, for nonlinear control systems, research has been conducted on linearization (both in the state and the input) \cite{proctor2016dynamic,proctor2018generalizing}, bilinearization \cite{goswami2021bilinearization,peitz2020data,bruder2021advantages}, and more general formulations \cite{haseli2023modeling, guo2025learning}, all of which provide finite-dimensional representations (see the recent overview \cite{strasser2025overview}).

It should be noted that these representations fundamentally rely on %the 
spectral properties of the Koopman operator. 
%Assumptions about the 
The existence of finite-dimensional representations depends on the existence of %the 
eigenfunctions of the Koopman operator, called Koopman eigenfunctions \cite{mauroy2020koopman}. 
This implies that, while the Koopman operator itself is defined on an infinite-dimensional Banach space, a finite number of Koopman eigenfunctions can construct an embedding from the state space to a finite-dimensional space, yielding a \emph{global linearization}. 
{\color{black}Thereby, the nonlinear system can be transformed into a finite-dimensional linear system.} 
The spectral properties for autonomous dynamical systems, particularly the existence and uniqueness of Koopman eigenfunctions, have been studied extensively \cite{mezic2005spectral,mezic2013analysis,mezic2020spectrum,kvalheim2021existence}, as well as %on 
the existence of global linearizations constructed from these eigenfunctions \cite{liu2023non,kvalheim2023linearizability,liu2025properties}. 
However, such linearizations for nonlinear systems with inputs have not yet been fully clarified (although Koopman-based feedback linearization inspired by geometric nonlinear control theory has been studied \cite{gadginmath2024data}). 
{\color{black}If a similar theory is established for the nonlinear systems with inputs, then it will provide a theoretical foundation of data-driven controller design of nonlinear systems via EDMD and its variants, which has assumed the existence of finite-dimensional representations.}

This paper aims to analyze spectral properties of the Koopman operator for nonlinear control systems.
Specifically, by treating an input as a (piecewise) constant parameter, we study how Koopman eigenfunctions depend on the parameters, clarifying the existence of global linearization for the control systems. 
%This work provides a theoretical foundation for the validity of EDMD-like algorithms that assume the existence of finite-dimensional representations. 
The contents of this paper are as follows: 
We consider a dynamical system $\dot{x} = F^u(x)$ with an input $u$ regarded as a parameter and assume that the system has a globally exponentially stable equilibrium point (GES EP). 
This enables us to leverage the spectral properties of the Koopman operator established by \cite{kvalheim2021existence}. 
We then analyze the parameter-dependence of the Koopman operator and of the Koopman eigenfunctions using the perturbation theory of linear operators  \cite{kato2013perturbation}. 
This analysis leads to the existence of a global linearization $\psi^u$ for the parameterized system that depends continuously on $u$ (Theorem \ref{thm:linearization}).
We next consider the condition under which $\psi^u$ does not depend on $u$, focusing on a control-affine system. 
This consideration, technically based on Lie algebraic arguments, results in a global bilinearization of the control-affine system that is independent of $u$ (Theorem \ref{thm:bilinearization}). 
%Throughout this paper we focus on the system with GES EP, however, we believe that the linearization theorems can be extended to a globally exponentially stabilizable system by considering an input change $v = k(x,u)$. 

The remainder of this paper is organized as follows.
The rest of Section~\ref{sec:1} provides preliminaries.
Section~\ref{sec:2} analyzes the resolvent of the Koopman operator.
Section~\ref{sec:3} defines the Koopman operators for a parameterized dynamical system and analyzes the parameter-dependence of its eigenfunctions. 
This enables the global linearization that depends continuously on the parameter.
Section~\ref{sec:4} presents the global bilinearization of a control-affine system, and Section~\ref{sec:5} concludes the paper.

\subsection{Preliminary}
\subsubsection*{Notations}
We denote the set of real numbers by $\mathbb{R}$, the set of $n$-dimensional real numbers by $\mathbb{R}^n$, the set of complex numbers by $\mathbb{C}$, the set of $n$-dimensional complex numbers by $\mathbb{C}^n$, the set of non-negative integers by $\mathbb{N}_{\geq 0}$, {\color{black}the set of natural numbers by $\mathbb{N}_{\geq 1}$,} and the set of $n$-tuples of non-negative integers by $\mathbb{N}_{\geq 0}^n$. 
The dual space of $\mathbb{R}^n$ is denoted by $(\mathbb{R}^n)^*$ and that of $\mathbb{C}^n$ by $(\mathbb{C}^n)^*$. 
For the $d$-dimensional Euclidean space, a closed ball with a center $u_0$ and a radius $\delta$ is denoted by $\bar{B}^{\mathbb{R}^d} (u_0,\delta)$.
For Banach spaces $\mathcal{F}$ and $\mathcal{G}$, we denote the set of bounded linear operators from $\mathcal{F}$ to $\mathcal{G}$ by $\mathcal{L} (\mathcal{F}, \mathcal{G})$, and the set of bounded, symmetric multilinear operators from $\mathcal{F}^{\otimes i}$ to $\mathcal{G}$ by $\mathcal{L}^i (\mathcal{F}, \mathcal{G})$. 
For a differentiable map $F:\mathbb{K}^n \to \mathbb{K}^m$ with $\mathbb{K} = \mathbb{R}$ or $\mathbb{C}$, ${\rm D}_x^i F (x_0)$ with $x_0 \in \mathbb{K}^n$ represents an $i$-th order derivative, which is an element of $\mathcal{L}^i (\mathbb{K}^n,\mathbb{K}^m)$.

\subsubsection*{Linear Operators}
Let $\mathcal{F}$ be a Banach space and $L:\mathcal{D}(L) \to \mathcal{F}$ be a closed linear operator with a domain $\mathcal{D} (L)$. 
The \emph{resolvent set} of $L$, denoted by $\rho(L) \subset \mathbb{C}$, is the set of all complex numbers $\zeta$ for which $L -\zeta I$ is invertible and $(L - \zeta I)^{-1}$ is bounded, where $I$ is the identity operator. 
The \emph{resolvent} of $L$, denoted by $R(\zeta,L):\mathcal{F} \to \mathcal{F}$, is a bounded operator defined by
$$ R(\zeta,L) = (L -\zeta I)^{-1}, \quad \forall \zeta \in \rho (L). $$
The complement set of $\rho(L)$, denoted by $\sigma(L)$, is called spectrum, containing the eigenvalues of $L$.

\begin{definition}
An eigenvalue $\mu \in \sigma(L)$ is said to be \emph{simple} if the generalized eigenspace of $L$ associated with $\mu$ is one-dimensional. 
\end{definition}
\begin{definition}
An eigenvalue $\mu \in \sigma(L)$ is said to be \emph{isolated} if there exists $\delta > 0$ such that any $\zeta \in \mathbb{C}$ with $0<|\zeta-\mu| < \delta$ belongs to $\rho (L)$. 
\end{definition}
Suppose that $\mu \in \sigma(L)$ is a simple, isolated eigenvalue.
According to \cite{kato2013perturbation} (see also \cite{susuki2021koopman_resolvent, susuki2024koopman}), the projection operator onto the eigenspace of $\mu$, called the eigenprojection, and denoted by $P_\mu$, corresponds to the complex integral of $-R(\zeta,L)/2\pi {\rm i}$:
\begin{equation}
    \label{eq:integral_resolvent}
    P_{\mu}  = -\frac{1}{2\pi {\rm i}} \int_{\gamma} R(\zeta,L){\rm d}\zeta,
\end{equation}
where $\gamma$ is a circle that encloses only $\mu$ and excludes the rest of the spectrum, and on which $R(\zeta,L)$ is holomorphic in $\zeta$. 

Next, the concept of strong convergence of a parameterized linear 
\emph{bounded} operator on a Banach space is introduced from %based on 
\cite{kato2013perturbation}. 
Let $L^u$ be a linear bounded operator with a parameter $u \in \mathbb{U} \subset \mathbb{R}^d$. 
\begin{definition}
For $u _0 \in \mathbb{U}$ with an open set $\mathbb{U} \subset \mathbb{R}^d$, $L^u$ \emph{strongly converges} to $L^{u _0}$ if for any $f\in \mathcal{F}$ and any $\varepsilon$, there exists $\delta>0$ such that 
$$
u  \in \bar{B}^{\mathbb{R}^d}(u _0, \delta) \cap \mathbb{U} \implies \| L^u  f - L^{u _0} f \|_{\mathcal{F}} < \varepsilon. 
$$
\end{definition}

If the resolvent operator $R(\zeta,L^u)$ strongly converges as $u \to u_0$ for all $\zeta \in \gamma$ where $L^u$ is a closed operator, we obtain the strong convergence of the eigenprojection according to the integral \eqref{eq:integral_resolvent}; 
\begin{proposition}[See Chap 8.1.4 of \cite{kato2013perturbation}]
\label{cor:strong_convergence_P}
Assume that, for all $u\in \bar{B}^{\mathbb{R}^d} (u_0, \delta)$ with sufficiently small $\delta > 0$, $\mu^u $ is a simple and isolated eigenvalue of $L^u$, and is continuous as a function of $u $. 
If there exists $\varepsilon >0$ such that $R(\zeta,L^u)$ strongly converges to $R(\zeta,L^{u_0})$ for all $\zeta$ with $0<|\zeta - \mu^{u_0}| < \varepsilon$,  then the eigenprojection of $L^u$ associated with $\mu^u$, denoted by $P_{\mu^u}^u$, strongly converges to $P_{\mu^{u_0}}^{u_0}$. 
\end{proposition}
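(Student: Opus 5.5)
The plan is to represent both eigenprojections by the contour integral \eqref{eq:integral_resolvent} over one common circle, and then pass the strong convergence of the resolvents through the integral.

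\emph{Choice of contour.} Since $\mu^{u_0}$ is isolated, I pick $r>0$ with $r<\varepsilon$ such that the closed disc $|\zeta-\mu^{u_0}|\le r$ contains no spectrum of $L^{u_0}$ other than $\mu^{u_0}$. Let $\gamma=\{\zeta:|\zeta-\mu^{u_0}|=r\}$. By continuity of $u\mapsto\mu^u$, I can shrink $\delta$ so that $|\mu^u-\mu^{u_0}|<r/2$ for $u\in\bar B^{\mathbb{R}^d}(u_0,\delta)$.

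\emph{Common contour for nearby $u$.} I need $\gamma$ to enclose only $\mu^u$ from $\sigma(L^u)$ and to lie in $\rho(L^u)$. I will take this from the standing hypothesis: the resolvents $R(\zeta,L^u)$ are assumed to exist for $0<|\zeta-\mu^{u_0}|<\varepsilon$, so the punctured disc lies in $\rho(L^u)$. Then the spectrum of $L^u$ inside the disc is exactly $\{\mu^u\}$. Hence \eqref{eq:integral_resolvent} holds with this same $\gamma$ for every $u$ near $u_0$:
\begin{equation*}
P^u_{\mu^u}f-P^{u_0}_{\mu^{u_0}}f=-\frac{1}{2\pi{\rm i}}\int_\gamma\bigl(R(\zeta,L^u)f-R(\zeta,L^{u_0})f\bigr)\,{\rm d}\zeta .
\end{equation*}

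\emph{Passing to the limit.} For fixed $f$, the integrand tends to $0$ pointwise in $\zeta\in\gamma$ by hypothesis. To conclude, I will use dominated convergence for Bochner integrals, or alternatively upgrade pointwise convergence to uniform convergence on the compact set $\gamma$. Either route needs a bound $\sup_{\zeta\in\gamma}\|R(\zeta,L^u)\|\le M$ uniformly in $u$ near $u_0$.
\begin{itemize}
\item For each fixed $\zeta$, the orbit $\{R(\zeta,L^u)f\}$ is bounded as $u\to u_0$, since it converges. The uniform boundedness principle then gives a bound on $\|R(\zeta,L^u)\|$ pointwise in $\zeta$.
\item To make the bound uniform in $\zeta$, I will use the resolvent identity $R(\zeta,L^u)-R(\zeta',L^u)=(\zeta-\zeta')R(\zeta,L^u)R(\zeta',L^u)$. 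With the Neumann series, this shows that a bound at finitely many points of $\gamma$ propagates to a neighborhood of each point.
\item Compactness of $\gamma$ then yields a uniform $M$.
\end{itemize}
The same identity gives equicontinuity in $\zeta$ of $\zeta\mapsto R(\zeta,L^u)f$. Combined with pointwise convergence, this gives uniform convergence on $\gamma$. The integral of the difference is then bounded by $r\sup_{\zeta\in\gamma}\|R(\zeta,L^u)f-R(\zeta,L^{u_0})f\|\to0$, which is the claimed strong convergence.

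\emph{Main obstacle.} The hard step is the uniform resolvent bound. In particular, the uniform boundedness principle must be applied along sequences $u_n\to u_0$, since the parameter set is a ball rather than a sequence. Strong convergence for all sequences is equivalent to the statement, so this is harmless.
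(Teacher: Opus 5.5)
Your route is the one the paper intends when it cites Kato: fix one circle $\gamma$, write both projections via \eqref{eq:integral_resolvent}, and pass strong resolvent convergence through the integral. Your uniform bound (Banach--Steinhaus along sequences, Neumann series, compactness of $\gamma$) is sound, so the obstacle you flagged is not the real one.

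The genuine gap is the step \emph{Common contour for nearby $u$}. Reading the hypothesis as ``the punctured disc lies in $\rho(L^u)$'' is self-contradictory. You arranged $|\mu^u-\mu^{u_0}|<r/2<\varepsilon$, so whenever $\mu^u\neq\mu^{u_0}$ the eigenvalue $\mu^u$ itself lies in that punctured disc. Unless $\mu^u$ is locally constant, the hypothesis can only mean that for each fixed $\zeta$ the resolvent $R(\zeta,L^u)$ exists and converges for $u$ in a $\zeta$-dependent neighborhood of $u_0$. Read this way, your compactness argument does put $\gamma$ in $\rho(L^u)$ for $u$ near $u_0$. But it says nothing about $\sigma(L^u)$ near the puncture $\mu^{u_0}$, where further spectrum of $L^u$ may accumulate as $u\to u_0$. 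Hence $-\frac{1}{2\pi{\rm i}}\int_\gamma R(\zeta,L^u)\,{\rm d}\zeta$ is the projection $P^u_\gamma$ for \emph{all} of $\sigma(L^u)$ inside $\gamma$. What you have actually proved is $P^u_\gamma\to P^{u_0}_{\mu^{u_0}}$ strongly, and since strong limits can lower the rank, this does not give $P^u_\gamma=P^u_{\mu^u}$.

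This cannot be fixed from the stated hypotheses alone. On $\ell^2$ let $L^0=\mathrm{diag}(0,1,1,\ldots)$ and $L^u=L^0-(1-|u|)\langle\cdot,v(u)\rangle v(u)$ for $0<|u|\le 1/2$. Here the unit vectors $v(u)\perp e_1$ rotate from $e_n$ to $e_{n+1}$ as $|u|$ decreases from $1/n$ to $1/(n+1)$, $n\ge 2$, so that $v(u)\to 0$ weakly as $u\to 0$. Then:
\begin{itemize}
\item $\sigma(L^u)=\{0,|u|,1\}$, and $\mu^u:=|u|$ is simple, isolated and continuous;
\item $R(\zeta,L^u)-R(\zeta,L^0)=\bigl(\frac{1}{|u|-\zeta}-\frac{1}{1-\zeta}\bigr)\langle\cdot,v(u)\rangle v(u)\to 0$ strongly for every $0<|\zeta|<1$, with the resolvents even uniformly bounded on $|\zeta|=r$ once $|u|<r/2$;
\item yet $P^u_{\mu^u}=\langle\cdot,v(u)\rangle v(u)$ converges strongly to $0$, not to $P^0_0=\langle\cdot,e_1\rangle e_1$.
\end{itemize}

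What you must add is that $\gamma$ encloses no point of $\sigma(L^u)$ other than $\mu^u$ for all $u$ near $u_0$ (equivalently, $\operatorname{rank}P^u_\gamma=1$). The paper's formula \eqref{eq:integral_resolvent} presupposes this anyway. With it, the rest of your argument goes through unchanged.

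In the paper's application, Theorem~\ref{thm:convergence_KEP_nonresonant}, this input comes from Proposition~\ref{prop:resolvent_KO}. In ${\rm Re}\,\zeta>k\max_\ell{\rm Re}\,\lambda_\ell^u$ the spectrum of $L_F^u$ lies in the finite set $\{\sum_j m_j\lambda_j^u:\sum_j m_j\le k\}$. By continuity of the $\lambda_j^u$ (Remark~\ref{rem:eigenvalue_continuous}) together with the $k$-nonresonant and $k$-spectral spread conditions at $u_0$, only $\lambda_i^u$ stays inside a small circle around $\lambda_i^{u_0}$.
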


%%%%%
\section{Resolvent of the Koopman Operator}
\label{sec:2}
In this section, we introduce the Koopman operator for a continuous-time dynamical system and analyze its resolvent. 
Consider a dynamical system
\begin{equation}
    \label{eq:DS_nopara}
    \dot{x} := \frac{d x}{d t} = F(x), \quad x \in \mathbb{X},
\end{equation}
where $\mathbb{X} \subset \mathbb{R}^n$, representing the state space, is the closure of a precompact open set and positively invariant, and $F:\mathbb{X}\to {\rm T}\mathbb{X}$ is a $C^r$ vector field with $r \in \mathbb{N}_{\geq 1}\cup \{\infty \}$. 
Thanks to the positive invariance, the one-parameter semi-group of nonlinear maps, denoted by $\{S_t : \mathbb{X} \to \mathbb{X}\}_{t \geq 0}$, can be defined, which is called a flow. 
We define the family of the Koopman operators, denoted by $\{ U_t :\mathcal{F} \to \mathcal{F} \}_{t \geq 0}$, by a composition operator with the flow as follows:
\begin{equation}
    \label{eq:Koopmandef}
    U_t f = f \circ S_t, \quad f \in \mathcal{F}, 
\end{equation}
where $f:\mathbb{X}\to \mathbb{C}$ is a function called observable and $\mathcal{F}$ is a Banach space. 
It can be shown that $U_t$ is linear, bounded operator for all $t\geq 0$, and is a one-parameter semi-group. 

Here, we consider $\mathcal{F}$ as a set of differentiable functions, denoted by $C^k(\mathbb{X})$ with a positive integer $1\leq k \leq r$, which is a Banach space according to the compactness of $\mathbb{X}$ with the norm
\begin{equation}
    \label{eq:norm}
    \| f \|_{C^k (\mathbb{X})} := \sup_{\substack{0\leq i \leq k \\x\in \mathbb{X}}} \left\| {\rm D}_x^i f (x) \right\|_{\mathcal{L}^i\left( \mathbb{R}^n, \mathbb{C} \right)} < +\infty.
\end{equation}
It then follows that $\{U_t\}_{t\geq 0}$ is a strongly continuous semi-group, implying the existence of the limit
\begin{equation}
    \label{eq:generator}
    \lim_{t \downarrow 0} \frac{U_t f - f}{t} =: L_F f, \quad \forall f \in \mathcal{D} (L_F),
\end{equation}
where the domain $\mathcal{D} (L_F)$ of $L_F$ is a dense set in $\mathcal{F}$. 
We call this infinitesimal generator $L_F$ %as
the Koopman generator associated with the vector field $F$. 
This generator corresponds to a Lie derivative with respect to $F$ and satisfies 
$$
L_F f (x) = {\rm D}_x f (x) \cdot F(x),\quad \forall x \in \mathbb{X}.
$$

Now, we consider the eigenvalue $\mu\in \mathbb{C}$ and the associated eigenfunction $\phi_\mu \in \mathcal{F}\backslash \{0\}$ of $L_F$, called the Koopman eigenvalue and the Koopman eigenfunction, as satisfying 
\begin{equation}
    \label{eq:eigendef}
    L_F \phi_{\mu}= \mu \phi_\mu,\quad \phi_\mu \neq 0,
\end{equation}
or equivalently,
\begin{equation}
    \label{eq:eigendef2}
    U_t \phi_{\mu}= {\rm e}^{\mu t} \phi_\mu,\quad \phi_\mu \neq 0,~\forall t\geq 0.
\end{equation}
Suppose that the system \eqref{eq:DS_nopara} has the globally exponentially stable equilibrium point (GES EP) at the origin $\{ 0 \}$ and that the interior of $\mathbb{X}$ contains the GES EP. 
Denote the Jacobian of $F(x)$ at $x=0$ as ${\rm D}_x F(0)$ and let $\lambda_1,\ldots,\lambda_n \in \mathbb{C}$ 
be the eigenvalues of ${\rm D}_x F(0)$ whose real parts are negative. 
The following proposition clarifies the resolvent set of the Koopman generator. 
\begin{proposition}
\label{prop:resolvent_KO}
Let $k\in \mathbb{N}_{\geq 1}$ be $k \leq r$ and let $\zeta \in \mathbb{C}$ satisfy (i) ${\rm Re}\, \zeta > k\cdot {\rm max}_{1\leq \ell \leq n} {\rm Re} \, \lambda_\ell$ holds; (ii) there is no $(m_1,\ldots,m_n) \in \mathbb{N}_{\geq 0}^n$ such that $\zeta = m_1 \lambda_1 + \cdots + m_n \lambda_n$ and $m_1+ \cdots + m_n \leq k$. 
Then, $\zeta$ is an element of {\color{black}$\rho(L_F)$}. 
\end{proposition}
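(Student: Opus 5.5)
To prove that $\zeta\in\rho(L_F)$, we must show that $L_F-\zeta I$ is a bijection from $\mathcal{D}(L_F)$ onto $C^k(\mathbb{X})$ with bounded inverse. Since $L_F$ is closed, the bounded inverse follows once bijectivity is established. The natural candidate for the inverse is the Laplace transform of the semigroup,
\begin{equation*}
R(\zeta,L_F) g = -\int_0^\infty {\rm e}^{-\zeta t} U_t g \,{\rm d}t .
\end{equation*}
This integral does not converge on all of $C^k(\mathbb{X})$ under condition (i) alone, because $U_t g\to g(0)$ does not decay. The plan is therefore to split off a finite-dimensional part spanned by polynomial-type Koopman eigenfunctions, handle it algebraically using condition (ii), and treat the remainder with the Laplace integral, using condition (i).

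\textbf{Step 1 (finite-dimensional part).} By the results of \cite{kvalheim2021existence}, for a $C^r$ GES EP, the linearization at $0$ (after a $C^k$-regularity argument) yields principal eigenfunctions $\phi_{\lambda_\ell}$ in $C^k$ that form a local coordinate system. Let $\mathcal{P}_k$ be the finite-dimensional space spanned by the monomials $\phi_{\lambda_1}^{m_1}\cdots\phi_{\lambda_n}^{m_n}$ with $|m|\le k$. This space is $L_F$-invariant, with eigenvalues $m\cdot\lambda$, and generalized eigenvectors if $D_xF(0)$ is non-diagonalizable. Every $g\in C^k(\mathbb{X})$ decomposes as $g=p+h$, where $p\in\mathcal{P}_k$ matches the $k$-jet of $g$ at $0$, expressed in the $\phi$-coordinates, and $h$ is $k$-flat at $0$, i.e.\ ${\rm D}^i_x h(0)=0$ for $i\le k$. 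The map $g\mapsto p$ is bounded, since it depends only on finitely many derivatives at $0$. On $\mathcal{P}_k$, condition (ii) says that $\zeta$ is not an eigenvalue, so $L_F-\zeta I$ is invertible there.

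\textbf{Step 2 (flat part).} For $h$ that is $k$-flat at $0$, I would show the estimate
\begin{equation*}
\|U_t h\|_{C^k}\le C\,{\rm e}^{(k\alpha+\epsilon)t}\|h\|_{C^k},
\end{equation*}
where $\alpha=\max_\ell{\rm Re}\,\lambda_\ell$ and $\epsilon>0$ is arbitrary. This is the main obstacle. The argument differentiates $h\circ S_t$ via the Fa\`a di Bruno formula. Each term contains some ${\rm D}^i h(S_t x)$, which by flatness is bounded by $o(|S_t x|^{k-i})$, or more precisely by a modulus-of-continuity factor times $|S_tx|^{k-i}$. Each term also contains products of derivatives of $S_t$, controlled using GES and the exponential dichotomy at the equilibrium: $|S_tx|\lesssim {\rm e}^{(\alpha+\epsilon)t}$ and $\|{\rm D}S_t\|\lesssim {\rm e}^{(\alpha+\epsilon)t}$, with higher derivatives of $S_t$ bounded similarly after a finite transient handled by compactness. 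The total exponent in each term is at least $k(\alpha+\epsilon)$. Because $k$-flat functions are not closed under the sup norm on ${\rm D}^k$ bounds alone, a careful argument may need a density or modulus argument near $0$. Alternatively, one could restrict to a subspace where the estimate is uniform and use that ${\rm D}^k h(0)=0$ propagates.

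\textbf{Step 3 (assembly).} Given the estimate in Step 2, condition (i) (choosing $\epsilon$ small) makes the Laplace integral converge in $C^k$ on the closed invariant subspace of $k$-flat functions. The standard semigroup argument then shows that this integral is the inverse of $L_F-\zeta I$ on that subspace. Combining this with Step 1 gives a bounded two-sided inverse on all of $C^k(\mathbb{X})=\mathcal{P}_k\oplus\{k\text{-flat}\}$; both summands are $L_F$-invariant. Hence $\zeta\in\rho(L_F)$.
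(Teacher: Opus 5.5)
\textbf{Gap in Step 1.} You assume that the principal eigenfunctions $\phi_{\lambda_1},\ldots,\phi_{\lambda_n}$ exist in $C^k(\mathbb{X})$ and form local coordinates. That is Proposition~\ref{prop:kval}, which requires every $\lambda_\ell$ to satisfy the $k$-nonresonant and $k$-spectral spread conditions. Proposition~\ref{prop:resolvent_KO} constrains only $\zeta$, and no regularity argument supplies the missing hypotheses.

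Concretely, take \eqref{eq:ex1} with $u=-1$, $a=2$, i.e.\ $\dot x_1=-x_1$, $\dot x_2=-2x_2+x_1^2$ on $\mathbb{X}=[-1,1]^2$, and $k=2$. The point $\zeta=-3/2$ satisfies (i) and (ii). Yet comparing $2$-jets in $L_F\phi=-2\phi$ forces the $x_2$-coefficient of $\phi$ to vanish, because the $x_1^2$ produced by $L_Fx_2=-2x_2+x_1^2$ cannot be matched. So there is no $C^2$ principal eigenfunction for $\lambda_2=-2$; the actual ones, such as $x_2+x_1^2\log|x_1|$, are only $C^1$. Hence $\mathcal{P}_2$ does not exist, and the invariant splitting $C^k=\mathcal{P}_k\oplus\{k\text{-flat}\}$ used in Step 3 is unavailable. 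The proposition nevertheless asserts $-3/2\in\rho(L_F)$ here, so your argument proves only a weaker statement.

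\textbf{How to close it.} You do not need an invariant complement. Since $F(0)=0$, the $k$-jet of $L_Fp$ at $0$ depends only on the $k$-jet of $p$. On polynomials in $x$ of degree $\le k$, the induced map is block triangular by degree. Its diagonal blocks are $q\mapsto{\rm D}_xq\cdot{\rm D}_xF(0)x$ on homogeneous polynomials of degree $j$, with eigenvalues $m_1\lambda_1+\cdots+m_n\lambda_n$, $m_1+\cdots+m_n=j$.
\begin{itemize}
\item Condition (ii) alone therefore gives a unique polynomial $p$ with $r:=(\zeta I-L_F)p-g$ $k$-flat. This is \eqref{eq:polynomial} in the paper, from Lemmas 1 and 4 of \cite{kvalheim2021existence}.
\item Then $f=p-\int_0^\infty{\rm e}^{-\zeta t}U_tr\,{\rm d}t$ solves $(\zeta I-L_F)f=g$.
\item Injectivity follows because a kernel element has zero $k$-jet by (ii), and then vanishes by your flat-part resolvent.
\end{itemize}
Put differently, the $k$-flat subspace is invariant, and $\zeta I-L_F$ is invertible both on it and on the quotient of $k$-jets.

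\textbf{Step 2 is sound, and easier than you fear.} The flat functions form a closed subspace. Taylor's formula gives $|{\rm D}_x^ih(y)|\le C|y|^{k-i}\|h\|_{C^k}$ near $0$. The variational equations give $\|{\rm D}_x^jS_t\|\le C{\rm e}^{(\alpha+\epsilon)t}$ for $1\le j\le k$. So every Fa\`a di Bruno term is $O({\rm e}^{k(\alpha+\epsilon)t})$, with no density argument needed. The paper obtains this estimate on an adapted ball from \cite[Lemma 5]{kvalheim2021existence} and transports it to $\mathbb{X}$ via $V_T$. Its formula for $f$ is exactly the Laplace integral above, split at time $T$.
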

{\color{black}\begin{proof}
    See Appendix~\ref{app:1}. 
\end{proof}}

According to \cite{kvalheim2021existence}, the eigenvalues of the Jacobian ${\rm D}_x F (0)$ are Koopman eigenvalues whose associated eigenfunctions compose a diffeomorphism representing a conjugate linear system. 
To present this in more detail, we introduce the following two conditions: 
\begin{definition}
    \label{def:k-nonresonant}
For given $k$, $\lambda_i$ with $i\in \{1,\ldots, n\}$ is said to satisfy \emph{$k$-nonresonant condition} if there is no $(m_1,\ldots,m_i,\ldots, m_n) \in \mathbb{N}_{\geq 0}^n \backslash \{ (0,\ldots,1,\ldots,0) \}$ such that $\lambda_i = m_1 \lambda_1 +\cdots + m_n \lambda_n$ and $m_1+\cdots + m_n \leq k$. 
\end{definition}
\begin{definition}
    \label{def:spectral_spread}
For given $k$, $\lambda_i$ with $i\in \{1,\ldots, n\}$ is said to satisfy \emph{$k$-spectral spread condition} if ${\rm Re}\,\lambda_i > k\cdot {\rm max}_{1\leq \ell \leq n} {\rm Re} \, \lambda_\ell$. 
\end{definition}
The authors of \cite{kvalheim2021existence} established the following proposition.
\begin{proposition}[See \cite{kvalheim2021existence}] 
\label{prop:kval}
Let $k\in \mathbb{N}_{\geq 1}$ be $k \leq r$. 
If $\lambda_i$ with $i\in \{1,\ldots, n\}$ satisfy the $k$-nonresonant and $k$-spectral spread conditions, $\lambda_i$ is a simple eigenvalue of $L_F$.
%there uniquely exist $\phi_\mu \in \mathcal{F}\backslash \{0\}$ satisfying $$ U \phi_\mu = \mu \phi_\mu. $$
\end{proposition}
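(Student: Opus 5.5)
The plan is to follow the strategy of \cite{kvalheim2021existence}. First we build an eigenfunction whose Taylor jet at the origin is prescribed. Then we show that any generalized eigenfunction for $\lambda_i$ is determined by its $k$-jet at $0$, and that nonresonance pins this jet down to a one-dimensional family. Throughout, write $A := {\rm D}_x F(0)$.

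\emph{Step 0 (spectral preliminaries).} Taking $m = e_j$ with $j \neq i$ in Definition~\ref{def:k-nonresonant} shows that $\lambda_j \neq \lambda_i$ for all $j \neq i$. Hence $\lambda_i$ is an algebraically simple eigenvalue of $A$, with a left eigenvector $w \in (\mathbb{C}^n)^*$ satisfying $wA = \lambda_i w$.

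\emph{Step 1 (formal solution).} We look for a polynomial $p(x) = w x + \sum_{2 \le |m| \le k} c_m x^m$ satisfying $L_F p - \lambda_i p = O(|x|^{k+1})$. Matching Taylor coefficients degree by degree gives homological equations. In a basis where $A$ is triangular, the operator $q \mapsto {\rm D}q\cdot Ax - \lambda_i q$ acting on homogeneous polynomials of degree $j$ has eigenvalues $m\cdot\lambda - \lambda_i$ with $|m| = j$. By the $k$-nonresonant condition these are nonzero for $2 \le j \le k$, so each equation is solvable. The same computation shows that the space of admissible $k$-jets is exactly ${\rm span}\{p\}$, up to order-$(k+1)$ terms.

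\emph{Step 2 (existence via a limit).} Define $\phi := \lim_{t\to\infty} e^{-\lambda_i t} U_t p$ in $C^k(\mathbb{X})$. Differentiating in $t$, the increment is $e^{-\lambda_i t} U_t (L_F p - \lambda_i p)$, and $L_F p - \lambda_i p$ vanishes to order $k+1$ at $0$. The key estimate is
\[
\bigl\| {\rm D}^j_x \bigl(h \circ S_t\bigr) \bigr\| \lesssim e^{(k+1)(\alpha+\epsilon)t}
\]
for $h = O(|x|^{k+1})$ and $j \le k$, where $\alpha := \max_\ell {\rm Re}\,\lambda_\ell < 0$. Near $0$ this follows from the Fa\`a di Bruno formula together with the bounds $|S_t x| \lesssim e^{(\alpha+\epsilon)t}$ and $\|{\rm D}^\ell S_t\| \lesssim e^{(\alpha+\epsilon)t}$, which come from GES and linearized stability. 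Away from $0$, trajectories enter a neighborhood of $0$ in uniform time, which handles the rest of the compact set $\mathbb{X}$. The $k$-spectral spread condition ${\rm Re}\,\lambda_i > k\alpha$ then makes the integrand decay exponentially, with margin to spare against the $(k+1)\alpha$ rate. So the limit exists and satisfies $U_t \phi = e^{\lambda_i t}\phi$, and $\phi \neq 0$ because its linear part is $w x$.

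\emph{Step 3 (simplicity).} Let $g$ lie in the generalized eigenspace of $\lambda_i$, so that $U_t g = e^{\lambda_i t}\sum_{\ell} t^\ell g_\ell/\ell!$ with $g_\ell = (L_F - \lambda_i)^\ell g$.
\begin{itemize}
\item Consider first a genuine eigenfunction $g$. Its $k$-jet at $0$ solves the same homological equations as in Step~1, so it equals $c$ times the $k$-jet of $\phi$ for some constant $c$. Then $h := g - c\phi$ is an eigenfunction with vanishing $k$-jet. The estimate of Step~2 gives $\|h\|_{C^k} = \|e^{-\lambda_i t} U_t h\|_{C^k} \to 0$, hence $h = 0$.
\item For a generalized eigenfunction, apply the eigenfunction case to $g_1$ to get $g_1 = c\phi$. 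Then compare linear parts at $0$: they satisfy $(A^\top - \lambda_i) (\text{linear part of } g) = c\, w^\top$. Since $\lambda_i$ is a simple eigenvalue of $A$ (Step~0), this forces $c = 0$.
\end{itemize}
Hence the generalized eigenspace is ${\rm span}\{\phi\}$, which is exactly the claim of simplicity.

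\emph{Main obstacle.} The delicate step is the uniform $C^k$ estimate on compositions $h\circ S_t$ over all of $\mathbb{X}$, not just near the origin. I would first handle a small ball using the linearized rate, then use compactness and GES to reduce global times to that ball.
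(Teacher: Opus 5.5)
Your architecture matches the Kvalheim--Revzen argument that the paper cites: the jet is fixed by nonresonance, existence comes from the limit $e^{-\lambda_i t}U_t p$, uniqueness from killing an eigenfunction with vanishing $k$-jet, and Jordan chains are excluded through the linear part at $0$. However, the decay estimate behind Steps 2 and 3 is misstated, and in Step 3 it is applied where it is false. You bound $\|{\rm D}^j_x(h\circ S_t)\|$ by $e^{(k+1)(\alpha+\epsilon)t}$ for $h=O(|x|^{k+1})$. But in Step 3, $h=g-c\phi$ is only a $C^k$ eigenfunction with vanishing $k$-jet, so ${\rm D}^m_x h(y)=o(|y|^{k-m})$, not $O(|y|^{k+1-m})$. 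The same problem affects $L_Fp-\lambda_i p$ in Step 2 when $r=k$. For such functions the $(k+1)$-rate fails. Take $\dot x_1=-x_1$, $\dot x_2=-\tfrac32 x_2$ on the closed unit disk, with $k=1$ and $\lambda_i=-\tfrac32$. Then $h=x_1|x_1|^{1/2}$ is $C^1$ with vanishing $1$-jet, and $h\circ S_t=e^{-3t/2}h$, so it decays like $e^{-3t/2}$, not like $e^{2(-1+\epsilon)t}$. Indeed $x_2$ and $x_2+x_1|x_1|^{1/2}$ are two $C^1$ eigenfunctions with the same $1$-jet, even though $\lambda_i$ is $1$-nonresonant and satisfies ${\rm Re}\,\lambda_i>(k+1)\alpha$; only the $1$-spectral spread condition fails. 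Your remark that the spectral spread hypothesis holds ``with margin to spare'' is the symptom: that hypothesis is exactly what the uniqueness step consumes.

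The repair is standard. For $h\in C^k$ with vanishing $k$-jet, Taylor's formula gives $\|{\rm D}^m_x h(y)\|\le C|y|^{k-m}\|h\|_{C^k}$. Fa\`a di Bruno, combined with your bounds on $S_t$ and ${\rm D}^\ell S_t$, then yields $\|h\circ S_t\|_{C^k(B)}\le Ce^{k(\alpha+\epsilon)t}\|h\|_{C^k(B)}$ on an adapted ball $B$. This is the Kvalheim--Revzen bound the paper quotes in the proof of Proposition~\ref{prop:resolvent_KO}. Since ${\rm Re}\,\lambda_i>k\alpha$, choosing $\epsilon$ small makes $e^{-{\rm Re}\,\lambda_i t}e^{k(\alpha+\epsilon)t}\to 0$. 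With your uniform-time reduction from $\mathbb{X}$ to $B$, Steps 2 and 3 then go through. One smaller point: your second bullet only covers $g\in{\rm Ker}(L_F-\lambda_i I)^2$, since only then is $g_1$ an eigenfunction. State the conclusion as ${\rm Ker}(L_F-\lambda_i I)^2={\rm Ker}(L_F-\lambda_i I)$ and induct on the nilpotency order.
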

\begin{remark}
\label{rem:isolated_eigenvalue}
Proposition \ref{prop:resolvent_KO} implies that $\lambda_i$ with $i\in \{1,\ldots,n\}$ is an isolated eigenvalue of $L_F$. 
\end{remark}

The Koopman principal eigenfunctions are then defined as $n$-tuple of $\lambda_1, \ldots, \lambda_n$ if these satisfy the $k$-nonresonant and $k$-spectral spread conditions with $k$ appropriately chosen. 
According to \cite{kvalheim2021existence}, the associated eigenfunctions, called Koopman principal eigenfunctions, satisfy ${\rm D}_x \phi_{\lambda_i} (0) \neq 0$ for all $i\in \{1,\ldots,n\}$ and ${\rm span} \{ {\rm D}_x \phi_{\lambda_1} (0),\ldots, {\rm D}_x \phi_{\lambda_n} (0)\} = (\mathbb{R}^n)^*$. 
This fact yields the existence of a $C^k$ diffeomorphsim $\psi : \mathbb{X} \to \mathbb{R}^n$ such that 
$$
 \frac{d}{dt}\psi(x) = {\rm D}_x F(0) \psi (x),\quad \forall x \in \mathbb{X},~\forall t \geq 0
$$
holds (see \cite[Proposition 2]{kvalheim2021existence}). 
{\color{black}
\begin{remark}
\label{rem:real-valued}
The Koopman principal eigenfunctions can be obtained from the eigenprojections of $L_F$, denoted by $P_{\lambda_1}, \ldots, P_{\lambda_n}$, on which our construction is based. 
We also note that if $\lambda_i$ and $\lambda_{i+1}$ are complex conjugate, the associated eigenprojections satisfy 
$$
\text{$f$ is real-valued} \implies \text{$(P_{\lambda_i} + P_{\lambda_{i+1}})f$ is real-valued}.
$$
\end{remark}
}

%%%%%
\section{Koopman Operators for Parameterized Dynamical Systems}
\label{sec:3}

\subsection{Introduction to Parameterized Koopman Operators}
Here, we introduce a dynamical system with a parameter and %the 
associated Koopman operator. 
Consider a continuous-time dynamical system with a parameter $u$, given as %:
\begin{equation}
    \label{eq:paraDS}
    \dot{x} = F^u  (x) ,\quad x\in \mathbb{X},
\end{equation}
{\color{black}where $\mathbb{X} \subset \mathbb{R}^n$ is the state space, $u  = (u _1,\ldots,u _d)\in \mathbb{U}$ is a parameter whose domain $\mathbb{U} \subset \mathbb{R}^d$ is an open set, and $F^u (x)$ is a vector field. }
We make the following two assumptions: %assume the followings:
\begin{assumption}
\label{asm:state_space}
{\color{black}The state space $\mathbb{X}$ is the closure of a precompact open set (and thus compact), and is independent
%\footnote{This is for simplicity. This assumption can be relaxed to the following case: Denote the state space for $F^u $ by $\mathbb{X}^u $. The case assumes that for any $u_1,u_2 \in \mathbb{U}$, $\mathbb{X}^{u_1}$ and $\mathbb{X}^{u_2}$ are $C^r$-diffeomorphic, and that there exists a diffeomorphism $T_{u_1,u_2}:\mathbb{X}^{u _1} \to \mathbb{X}^{u _2}$ such that $T_{u_1,u_2}$ and its (high-order) derivatives up to $r$-th order are continuous in $u_1$ and $u_2$. } 
of $u  \in \mathbb{U}$ and positively invariant for all $u  \in \mathbb{U}$. }
\end{assumption}
\begin{assumption}
\label{asm:continuity_para}
For any fixed $x\in \mathbb{X}$ and any $i \in \{0,\ldots, r\}$, ${\rm D}_x^i F^u  (x)$ are continuous in all $u \in\mathbb{U}$ in the $\mathcal{L}^i(\mathbb{R}^d,\mathbb{R}^n)$ norm. 
\end{assumption}
Assumption \ref{asm:state_space} guarantees that the flow of \eqref{eq:paraDS} parameterized by $u \in \mathbb{U}$ can be defined for all $u\in \mathbb{U}$, denoted by $\{S_t^u:\mathbb{X} \to \mathbb{X}\}_{t\geq 0}$. 
Then, the Koopman operator parameterized by $u \in \mathbb{U}$ is defined by 
\begin{equation}
    \label{eq:paraKoopmandef}
    U_t^u  f := f \circ S_t^u , \quad f \in \mathcal{F},
\end{equation}
and its generator by 
\begin{equation}
    \label{eq:paraGeneratordef}
    L_F^u f (x) := {\rm D}_x f (x) \cdot F^u (x), 
\end{equation}
where $\mathcal{F}=C^k(\mathbb{X})$ with $1\leq k\leq r$. 
The following theorem clarifies the strong convergence of the %parameterized Koopman 
semigroup 
$\{U_t^u\}_{t\geq 0}$ of the parameterized Koopman operators acting on $C^k (\mathbb{X})$. 
\begin{theorem}
\label{thm:scKO}
For any $t\geq 0$, $U_t^u$ strongly converges to $U_t^{u_0}$ as $u \to u_0$ with $u_0 \in \mathbb{U}$ in the $C^k(\mathbb{X})$ norm for all $k \leq r$. 
\end{theorem}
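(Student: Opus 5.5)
We need $\|f\circ S_t^u - f\circ S_t^{u_0}\|_{C^k}\to 0$ for each fixed $f\in C^k(\mathbb{X})$ and $t\ge 0$. The plan is to reduce this to continuity in $u$ of the flow map $S_t^u$ and of its spatial derivatives ${\rm D}_x^j S_t^u$, $j\le k$, uniformly in $x\in\mathbb{X}$. Then we combine these with the uniform continuity of ${\rm D}_x^i f$ on the compact set $\mathbb{X}$ through the Faà di Bruno formula.

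\textbf{Step 1: uniform control of the vector field.} Assumption~\ref{asm:continuity_para} gives only pointwise continuity in $u$ of ${\rm D}_x^iF^u(x)$. We first upgrade it to
\[
\sup_{x\in\mathbb{X}}\|{\rm D}_x^iF^u(x)-{\rm D}_x^iF^{u_0}(x)\|\to 0, \qquad i\le k,
\]
and obtain local uniform bounds on $\|{\rm D}_x^{i}F^u\|$ for $u$ near $u_0$. If the assumption is read as joint continuity of $(x,u)\mapsto {\rm D}_x^iF^u(x)$, this follows from compactness of $\mathbb{X}\times\bar B^{\mathbb{R}^d}(u_0,\delta)$. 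Otherwise we would use equicontinuity in $x$ coming from a local bound on ${\rm D}_x^{i+1}F^u$ when $i<r$, which is the natural reading needed. This is the step where I expect the hypotheses to be the main obstacle: pointwise continuity alone does not give uniform convergence. I would state explicitly that we use joint continuity, which is implicit in the paper's setting.

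\textbf{Step 2: continuity of the flow and its derivatives.} We set $e(t)=\sup_x|S_t^u(x)-S_t^{u_0}(x)|$ and write
\[
\frac{d}{dt}\bigl(S_t^u-S_t^{u_0}\bigr)=\bigl(F^u(S_t^u)-F^{u_0}(S_t^u)\bigr)+\bigl(F^{u_0}(S_t^u)-F^{u_0}(S_t^{u_0})\bigr).
\]
Grönwall's inequality with the Lipschitz constant of $F^{u_0}$ on $\mathbb{X}$ then gives $e(t)\le t\,\sup\|F^u-F^{u_0}\|\,{\rm e}^{\mathrm{Lip}\,t}\to0$. Positive invariance keeps all trajectories in $\mathbb{X}$. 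For ${\rm D}_xS_t^u$ we use the variational equation $\frac{d}{dt}{\rm D}_xS_t^u={\rm D}_xF^u(S_t^u){\rm D}_xS_t^u$ with initial condition $I$. The higher derivatives ${\rm D}_x^jS_t^u$ satisfy linear inhomogeneous ODEs whose coefficients are polynomial in ${\rm D}_x^iF^u(S_t^u)$, $i\le j$, and in lower-order derivatives of the flow. By induction on $j$, applying Grönwall each time, we get uniform bounds and uniform convergence in $x$ of ${\rm D}_x^jS_t^u\to{\rm D}_x^jS_t^{u_0}$ on $[0,t]$. This uses Step~1 together with the convergence ${\rm D}_x^iF^{u_0}(S_s^u)\to{\rm D}_x^iF^{u_0}(S_s^{u_0})$, which holds by uniform continuity of ${\rm D}_x^iF^{u_0}$ on $\mathbb{X}$.

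\textbf{Step 3: composition.} By Faà di Bruno, ${\rm D}_x^i(f\circ S_t^u)(x)$ is a finite sum of terms ${\rm D}^lf(S_t^u(x))[{\rm D}_x^{j_1}S_t^u(x),\dots,{\rm D}_x^{j_l}S_t^u(x)]$ with $l\le i\le k$. Each term converges uniformly in $x$ to the corresponding $u_0$ term. The reason is that ${\rm D}^lf$ is uniformly continuous on compact $\mathbb{X}$ and $S_t^u\to S_t^{u_0}$ uniformly, while the flow derivatives converge uniformly and are bounded by Step~2. Taking the supremum over $i\le k$ and $x\in\mathbb{X}$ in \eqref{eq:norm} yields $\|U_t^uf-U_t^{u_0}f\|_{C^k(\mathbb{X})}\to0$, which is the claimed strong convergence. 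Note that only strong convergence, not norm convergence, is obtainable here, since the modulus of continuity of ${\rm D}^kf$ depends on $f$.
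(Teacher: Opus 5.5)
Your proof is correct and follows the paper's route. The paper's one-line argument fixes $t$, treats the time-$t$ map $S_t^u$ as a $C^k$ map depending continuously on $u$, and invokes Irwin's continuity of the composition $(f,g)\mapsto f\circ g$ on $C^k$; your Steps~2 and~3 establish exactly this by hand, via Gr\"onwall on the variational equations and Fa\`a di Bruno.

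Your Step~1 caveat is justified, and the paper's argument needs it too: with only pointwise-in-$x$ continuity the claim can fail. For example, adding to $\dot x=-x$ a bump of height $u/2$ and width $u^2$ centred at $x=u$ gives ${\rm D}_xS_t^u(u)\to 2{\rm e}^{-t}\neq{\rm e}^{-t}$. So commit to joint continuity of $(x,u)\mapsto{\rm D}_x^iF^u(x)$ for $i\le k$. Drop the ${\rm D}_x^{i+1}F^u$-bound fallback, since the hypotheses do not supply that bound.
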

{\color{black}\begin{proof}
After discretizing the flow $S_t^u$ at $t$, then the idea of \cite{irwin1972smoothness} can be applied. 
\end{proof}}

\subsection{Parameter-Dependence of Koopman Eigenfunctions}
Now, we analyze the parameter-dependence of the Koopman eigenfunctions. 
We suppose the case where the system \eqref{eq:paraDS} has a GES EP, namely:
\begin{assumption}
\label{asm:GES_FP}
The system \eqref{eq:paraDS} has a GES EP at the origin
%\footnote{For simplicity, we fix the parameterized GES EP at the origin. If one wishes to consider the case where $\dot{x}= F^u (x)$ has a GES EP $x_0^u$, the following results can still be applied by rewriting the system as $\dot{x} = F^u  (x + x_0^u)$. } 
$\{0\} \in {\rm int}(\mathbb{X})$ for all $u  \in \mathbb{U}$. 
\end{assumption}
{\color{black}
\begin{remark}
\label{rem:strict_assumptions}
Throughout the remainder of this section we assume that the state space $\mathbb{X}$ is compact and independent of $u$ (Assumption \ref{asm:state_space}), and the GES EP is fixed at the origin (Assumption \ref{asm:GES_FP}). 
However, these assumptions are not essential and can be relaxed as follows:
\begin{enumerate}[label=A\arabic*)]
    \item The state space $\mathbb{X}^u$ is the closure of a precompact open set and is positively invariant for all $u\in \mathbb{U}$, although it may depend on $u$.
    \item The system \eqref{eq:paraDS} has a GES EP for all $u\in \mathbb{U}$
    \item There exists a $C^r$ fiber bundle structure\cite{Lee_smooth} $\pi:\mathbb{E} \to \mathbb{U}$, where $\mathbb{E}:=\bigsqcup_{u \in \mathbb{U}} \mathbb{X}^u \times \{u\}$.
\end{enumerate}
According to \cite{eldering2018global}, assumptions A1) and A2) yield the existence of a fiber bundle structure $\pi:\mathbb{E} \to \mathbb{U}$. 
Thus, assumption A3) further requires this fiber bundle structure to be $C^r$. 
We do not elaborate on this generalization here.
The key point is that the $C^r$ fiber bundle structure gives a local trivialization over an open cover $\{\mathbb{V}_{\alpha}\}$ of $\mathbb{U}$. 
In particular, for any $u_1,u_2\in\mathbb{V}_{\alpha}$, one obtains a diffeomorphism $T_{u_1,u_2}:\mathbb{X}^{u_1}\to\mathbb{X}^{u_2}$ depending smoothly on $(u_1,u_2)$ and fixing the GES equilibrium point. 
Hence, the fibers can be identified locally with a common reference space.
\end{remark}
}
Under Assumption \ref{asm:GES_FP}, the Jacobian ${\rm D}_x F^u  (0)$ has $n$ eigenvalues $\lambda_{1}^u , \ldots, \lambda_{n}^u $ depending on $u$ whose absolute values are strictly smaller than $1$. 
\begin{remark}
    \label{rem:eigenvalue_continuous}
    According to \cite[Chapter~2.5]{kato2013perturbation}, each eigenvalue of the parameterized matrix $A^u ={\rm D}_x F^u  (0)$ is continuous in $u$, implying the continuity of the Koopman principal eigenvalues. 
\end{remark}
The following theorem shows the strong convergence of the resolvent of the parameterized Koopman operator. 
\begin{theorem}
\label{thm:resolvent_KO_para}
Consider the system \eqref{eq:paraDS} with Assumptions \ref{asm:state_space}, \ref{asm:continuity_para}, and \ref{asm:GES_FP} and let $1 \leq k\leq r$ and $u _0 \in \mathbb{U}$ be arbitrary. 
Then, for all $\zeta \in \rho(L_F^{u_0})$, % with ${\rm Re} \, \zeta > k\cdot \max_{1\leq \ell \leq n} {\rm Re}\, \lambda_\ell^u$, 
$R(\zeta,L_F^u )$ strongly converges to $R(\zeta,L_F^{u_0})$. 
\end{theorem}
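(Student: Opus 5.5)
Here is the plan. I would prove that $R(\zeta,L_F^u)$ converges strongly to $R(\zeta,L_F^{u_0})$ in two stages. First handle $\zeta$ with large real part, where the resolvent is a Laplace transform of the semigroup. Then extend to all of $\rho(L_F^{u_0})$ using the resolvent identity and a Neumann-series argument.

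\textbf{Step 1 (uniform growth bound).} Using Assumptions \ref{asm:continuity_para} and \ref{asm:state_space}, and the compactness of $\mathbb{X}$, I would show that for $u$ in a small ball $\bar{B}^{\mathbb{R}^d}(u_0,\delta)$ the derivatives ${\rm D}_x^i F^u$, $i\le k$, are bounded uniformly in $(x,u)$. Differentiating $f\circ S_t^u$ via the variational equations and applying Gronwall then gives constants $M,\omega$, independent of $u$, with $\|U_t^u\|_{\mathcal{L}(C^k,C^k)}\le M{\rm e}^{\omega t}$.

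\textbf{Step 2 (large real part).} For ${\rm Re}\,\zeta>\omega$, the Hille--Yosida representation gives
\begin{equation*}
R(\zeta,L_F^u)f=-\int_0^\infty {\rm e}^{-\zeta t}U_t^u f\,{\rm d}t .
\end{equation*}
Theorem \ref{thm:scKO} gives $U_t^u f\to U_t^{u_0}f$ pointwise in $t$. Step 1 supplies the integrable dominating function $M{\rm e}^{(\omega-{\rm Re}\,\zeta)t}\|f\|$. Dominated convergence, for Bochner integrals in $C^k(\mathbb{X})$, then yields strong convergence for all ${\rm Re}\,\zeta>\omega$.

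\textbf{Step 3 (extension to all of $\rho(L_F^{u_0})$).} Fix $\zeta\in\rho(L_F^{u_0})$ and a reference point $\zeta_0$ with ${\rm Re}\,\zeta_0>\omega$. I would connect $\zeta_0$ to $\zeta$ by a compact path in $\rho(L_F^{u_0})$. Along the path, $\|R(\cdot,L_F^{u_0})\|$ is bounded by some $C$. Now suppose strong convergence holds at a point $\eta$ of the path and that $\|R(\eta,L_F^{u})\|$ stays bounded for $u$ near $u_0$. Write
\begin{equation*}
R(\eta',L^u)=R(\eta,L^u)\bigl(I-(\eta'-\eta)R(\eta,L^u)\bigr)^{-1},
\end{equation*}
which holds for $|\eta'-\eta|\,\|R(\eta,L^u)\|<1$. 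Expanding in a Neumann series, each term converges strongly, because products of strongly convergent, uniformly bounded operators converge strongly. The tail is uniformly small. This propagates both convergence and uniform boundedness a fixed distance along the path, so finitely many steps reach $\zeta$.

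\textbf{Main obstacle.} The hard part is the uniform bound on $\|R(\eta,L_F^u)\|$ for $u$ near $u_0$. Strong convergence alone does not preclude spectrum of $L_F^u$ accumulating near $\eta$, and the continuation stalls once the step size shrinks to zero. I would try first to get this bound from the explicit structure in Proposition \ref{prop:resolvent_KO} and its proof in Appendix~\ref{app:1}. That is, I would split $f$ into a degree-$\le k$ Taylor part at the origin and a remainder vanishing to order $k$. On the finite-dimensional jet space, the resolvent is a matrix built from ${\rm D}_x^iF^u(0)$. By Remark \ref{rem:eigenvalue_continuous} it is continuous in $u$, and it stays invertible near $u_0$ because the resonance sums $m_1\lambda_1^u+\cdots+m_n\lambda_n^u$ move continuously. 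On the remainder, the Laplace-transform integral converges for ${\rm Re}\,\zeta>k\max_\ell{\rm Re}\,\lambda_\ell^u$, with estimates uniform in $u$ by the GES rate. Once bounds uniform in $u$ are established for both parts, Step 3 goes through. Alternatively, one could redo Step 2 directly on each part and avoid the continuation altogether.
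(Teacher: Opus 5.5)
Your outline is sound, but it is organized differently from the paper's proof.

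\textbf{The paper's route.} The paper never continues in $\zeta$. For each $\zeta$ satisfying (i)--(ii) of Proposition~\ref{prop:resolvent_KO}, it solves $(\zeta I-L_F^u)f^u=g$ explicitly. It splits off a polynomial $p^u$ of degree at most $k$, which is a finite-dimensional nonresonant inversion, continuous in $u$. The $k$-flat remainder $r^u$ is treated on an adapted ball $B$, where $\{{\rm e}^{-\omega^u t}U_t^u\}_{t\geq0}$ is a contraction semigroup for all $u$ near $u_0$. Trotter--Kato then gives strong convergence of $(L_F^u-\zeta I)^{-1}|_B$. The result is transported back to $\mathbb{X}$ through $V_T^u$ and $\int_0^T$, both of which converge strongly by Theorem~\ref{thm:scKO}.

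\textbf{Your route.} You seed convergence by Trotter--Kato on all of $C^k(\mathbb{X})$ for large ${\rm Re}\,\zeta$, which needs only a crude Gronwall bound. You then propagate it with the resolvent identity. This is the standard fact that the set of $\zeta$ where resolvents converge strongly is relatively open and closed in the set where $\|R(\zeta,L_F^u)\|$ is bounded uniformly for $u$ near $u_0$. This cleanly isolates the real difficulty, and you are right that strong convergence alone cannot supply the uniform bound.

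\textbf{What the continuation buys.} The only source you propose for the uniform bound is the paper's own jet/flat decomposition. Once that decomposition yields $u$-uniform estimates on the flat part, dominated convergence there together with $p^u\to p^{u_0}$ gives strong convergence at $\zeta$ directly. So the continuation becomes unnecessary. Your closing ``alternatively'' is exactly the paper's argument, which works pointwise in $\zeta$, needs no path, and produces the bound and the convergence at the same time.

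There are three points to tighten.

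\textbf{The flat-part threshold.} The sharp threshold $k\max_\ell{\rm Re}\,\lambda_\ell^u$ comes from the linear rates at the origin. On Kvalheim--Revzen's adapted ball, flat functions decay at rate $k\max_\ell{\rm Re}\,\lambda_\ell^u+\epsilon$. This must be combined with a $u$-uniform entry time $T$ into that ball, obtained from continuity of $S_T^u$ in $u$. It does not come from a global GES constant, which in general gives a worse threshold.

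\textbf{Continuity in $u$.} Your $u$-uniform estimates, Step~1 included, need $u\mapsto F^u$ to be continuous into $C^k(\mathbb{X})$. That is more than the pointwise-in-$x$ Assumption~\ref{asm:continuity_para}. The paper tacitly uses the same strengthening in Theorem~\ref{thm:scKO}.

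\textbf{Which $\zeta$ are reached.} As in the paper, your uniform bound is available only for $\zeta$ satisfying (i)--(ii) of Proposition~\ref{prop:resolvent_KO}, so your path must stay in that region. Reaching every $\zeta\in\rho(L_F^{u_0})$ requires knowing that the closed half-plane ${\rm Re}\,\zeta\le k\max_\ell{\rm Re}\,\lambda_\ell^{u_0}$ and the resonant sums lie in $\sigma(L_F^{u_0})$. Neither you nor the paper states this.
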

{\color{black}\begin{proof}
See Appendix~\ref{app:1}. 
\end{proof}}

For the principal eigenvalues $\lambda_1^u,\ldots, \lambda_n^u$, we can define the associated eigenprojections by \eqref{eq:integral_resolvent}, denoted by $P_{\lambda_1^u}^u,\ldots, P_{\lambda_n^u}^u$. 
We can then present the strong convergences of the eigenprojections from Theorem~\ref{thm:resolvent_KO_para}, Proposition~\ref{cor:strong_convergence_P}, Remark~\ref{rem:eigenvalue_continuous}, and Proposition~\ref{prop:kval}. 
\begin{theorem}
\label{thm:convergence_KEP_nonresonant}
Consider the system \eqref{eq:paraDS} with Assumptions \ref{asm:state_space}, \ref{asm:continuity_para}, and \ref{asm:GES_FP} and let $1\leq k\leq r$ and $u _0 \in \mathbb{U}$ be arbitrary. 
If $\lambda_i^{u_0}$ with $i \in \{1,\ldots, n\}$ satisfies the $k$-nonresonant and $k$-spectral spread conditions, the eigenprojection $P_{\lambda_i^{u}}^u$ associated with $\lambda_i^{u}$ strongly converges to $P_{\lambda_i^{u_0}}^{u_0}$ as $u \to u_0$. 
\end{theorem}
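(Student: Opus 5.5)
The plan is to verify the hypotheses of Proposition~\ref{cor:strong_convergence_P} for $L^u = L_F^u$ and $\mu^u = \lambda_i^u$, and then let that proposition produce the strong convergence of the eigenprojections. The argument has three steps: (a) show that $\lambda_i^u$ is a simple eigenvalue for all $u$ near $u_0$; (b) show that it is isolated, uniformly enough that one circle $\gamma$ works; (c) show that the resolvent converges strongly on a punctured neighborhood of $\lambda_i^{u_0}$.

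\emph{Step (a): simplicity persists near $u_0$.} Both the $k$-nonresonant and the $k$-spectral spread conditions are finitely many strict inequalities or non-equalities among the eigenvalues of ${\rm D}_x F^u(0)$. Only finitely many multi-indices with $m_1+\cdots+m_n\le k$ are involved. By Remark~\ref{rem:eigenvalue_continuous}, the eigenvalues $\lambda_1^u,\ldots,\lambda_n^u$ depend continuously on $u$, after a suitable continuous labeling that tracks $\lambda_i^{u_0}$. Hence the following quantities are bounded away from zero at $u_0$ and stay so on some ball $\bar{B}^{\mathbb{R}^d}(u_0,\delta)$:
\[
\Bigl|\lambda_i^u - \sum_\ell m_\ell \lambda_\ell^u\Bigr| \quad\text{and}\quad {\rm Re}\,\lambda_i^u - k\max_\ell {\rm Re}\,\lambda_\ell^u .
\]
So for every $u$ in this ball, $\lambda_i^u$ satisfies both conditions, and Proposition~\ref{prop:kval} gives that $\lambda_i^u$ is a simple eigenvalue of $L_F^u$, continuous in $u$.

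\emph{Step (b): uniform isolation.} By Remark~\ref{rem:isolated_eigenvalue} and Proposition~\ref{prop:resolvent_KO}, every $\zeta$ satisfying ${\rm Re}\,\zeta > k\max_\ell{\rm Re}\,\lambda_\ell^u$ and differing from all $\sum_\ell m_\ell\lambda_\ell^u$ with $|m|\le k$ lies in $\rho(L_F^u)$. The finitely many "resonance points" $\sum m_\ell\lambda_\ell^u$ other than $\lambda_i^u$ itself stay at distance at least some $2\varepsilon>0$ from $\lambda_i^{u_0}$ for $u$ near $u_0$; this again follows from the continuity of the eigenvalues and the nonresonance at $u_0$. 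The half-plane boundary also stays a positive distance away, by the spectral spread condition. Shrinking $\delta$ so that $|\lambda_i^u-\lambda_i^{u_0}|<\varepsilon/2$, we get that the punctured disk $0<|\zeta-\lambda_i^{u_0}|<\varepsilon$ lies in $\rho(L_F^{u_0})$. Moreover a circle $\gamma$ of radius $\varepsilon/2$–$\varepsilon$ around $\lambda_i^{u_0}$ encloses only $\lambda_i^u$ in $\sigma(L_F^u)$ for all nearby $u$. This makes the contour formula \eqref{eq:integral_resolvent} valid with a $u$-independent $\gamma$.

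\emph{Step (c): strong resolvent convergence, then conclusion.} For $\zeta$ in that punctured disk we have $\zeta\in\rho(L_F^{u_0})$, so Theorem~\ref{thm:resolvent_KO_para} gives that $R(\zeta,L_F^u)$ converges strongly to $R(\zeta,L_F^{u_0})$. All hypotheses of Proposition~\ref{cor:strong_convergence_P} then hold, so $P^u_{\lambda_i^u}$ converges strongly to $P^{u_0}_{\lambda_i^{u_0}}$.

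The main obstacle I expect is hidden inside Proposition~\ref{cor:strong_convergence_P}: passing from pointwise strong convergence on $\gamma$ to convergence of the contour integral $\int_\gamma R(\zeta,L_F^u)f\,{\rm d}\zeta$. This needs dominated convergence, i.e.\ a bound on $\sup_{\zeta\in\gamma}\|R(\zeta,L_F^u)\|$ that is uniform in $u$ near $u_0$. I would try to extract that bound from the proof of Theorem~\ref{thm:resolvent_KO_para} in Appendix~\ref{app:1}, presumably via the Neumann-series or graph-transform construction of the resolvent, whose constants depend continuously on ${\rm D}_x^iF^u$ by Assumption~\ref{asm:continuity_para}. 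Alternatively, I would invoke the uniform boundedness principle on a sequence $u_j\to u_0$ together with continuity of $\zeta\mapsto R(\zeta,L_F^u)$ on the compact set $\gamma$.
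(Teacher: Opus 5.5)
Your proposal is correct and follows the paper's route: the paper obtains the theorem by combining Theorem~\ref{thm:resolvent_KO_para}, Proposition~\ref{cor:strong_convergence_P}, Remark~\ref{rem:eigenvalue_continuous}, and Proposition~\ref{prop:kval} (with isolation via Proposition~\ref{prop:resolvent_KO}), just as in your steps (a)--(c), but with less detail on why the nonresonant and spectral-spread conditions persist near $u_0$ and why one contour works for all nearby $u$. The paper leaves the uniform resolvent bound on $\gamma$ to Kato through Proposition~\ref{cor:strong_convergence_P}, and your Banach--Steinhaus argument, combined with the Neumann-series local bound and compactness of $\gamma$, would indeed supply it.
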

\begin{corollary}
    \label{cor:KEF}
Let $1\leq k\leq r$ and $u _0 \in \mathbb{U}$ be arbitrary. 
If $\lambda_i^{u_0}$ satisfies the $k$-nonresonant and $k$-spectral spread conditions, there exist an open set
\footnote{\color{black}
This set $\mathbb{V}$ is chosen so that $\lambda_i^{u}$ satisfies the $k$-nonresonance and $k$-spectral spread conditions for all $u\in\mathbb{V}$, and so that the vector bundle with fiber ${\rm span}\{\phi_{\lambda_i^u}^u\}$ at each $u\in\mathbb{V}$ is trivial.
}
$\mathbb{V}\subset \mathbb{U}$ containing $u_0$ and a function $\phi_{\lambda_i^{(\cdot)}}^{(\cdot)}: \mathbb{V} \to C^k (\mathbb{X})$ such that this function is continuous in the $C^k(\mathbb{X})$ norm and that $\phi_{\lambda_i^{u}}^u$ is the Koopman principal eigenfunction associated with $\lambda_i^u$. 
\end{corollary}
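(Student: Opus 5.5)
The plan is to build $\phi_{\lambda_i^u}^u$ by applying the strongly continuous eigenprojection of Theorem~\ref{thm:convergence_KEP_nonresonant} to one fixed observable that is independent of $u$.

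\textbf{Step 1: choose $\mathbb{V}$.} The $k$-nonresonant and $k$-spectral spread conditions at $u_0$ are finitely many strict inequalities and non-equalities among finitely many linear combinations of the eigenvalues. By Remark~\ref{rem:eigenvalue_continuous}, $u\mapsto\lambda_\ell^u$ is continuous, so these conditions still hold for $\lambda_i^u$ on some open neighborhood $\mathbb{V}_1$ of $u_0$. For $u\in\mathbb{V}_1$, Proposition~\ref{prop:kval} makes $\lambda_i^u$ a simple eigenvalue of $L_F^u$, and Remark~\ref{rem:isolated_eigenvalue} makes it isolated. Hence $P^u:=P_{\lambda_i^u}^u$ is a rank-one projection onto ${\rm span}\{\phi^u\}$, where $\phi^u$ is any principal eigenfunction.

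\textbf{Step 2: a nondegenerate seed.} Fix a nonzero eigenfunction $\phi^{u_0}$ at $u_0$. By Hahn--Banach, pick $f_0\in C^k(\mathbb{X})$ with $P^{u_0}f_0=\phi^{u_0}$; taking $f_0=\phi^{u_0}$ works. Define
$$\phi_{\lambda_i^u}^u := P^u f_0,\qquad u\in\mathbb{V}.$$
By Theorem~\ref{thm:convergence_KEP_nonresonant}, $\|P^u f_0-P^{u_0}f_0\|_{C^k}\to 0$ as $u\to u_0$. Consequently $P^u f_0\neq 0$ on a smaller neighborhood $\mathbb{V}\subset\mathbb{V}_1$, and there it spans the eigenspace, so it is the principal eigenfunction for $\lambda_i^u$. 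Each $P^u$ commutes with $L_F^u$ and has range equal to the eigenspace, so $L_F^u P^u f_0=\lambda_i^u P^u f_0$ holds automatically.

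\textbf{Step 3: continuity on all of $\mathbb{V}$.} Continuity at points $u_1\in\mathbb{V}$ other than $u_0$ is the main obstacle. Theorem~\ref{thm:convergence_KEP_nonresonant} applies at every $u_1\in\mathbb{V}$, since its hypotheses hold there by Step 1. It therefore gives $P^u f_0\to P^{u_1}f_0$ in $C^k$ as $u\to u_1$, which is exactly continuity of $u\mapsto P^u f_0$ at $u_1$. The only remaining risk is that $P^{u}f_0$ vanishes somewhere in $\mathbb{V}$. To exclude this, take $\mathbb{V}$ to be the open set $\{u\in\mathbb{V}_1: P^uf_0\neq 0\}$, or its connected component containing $u_0$. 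This set is open because $u\mapsto P^u f_0$ is continuous on $\mathbb{V}_1$ by the same argument, and it contains $u_0$ by Step 2.

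With this choice the map $\mathbb{V}\to C^k(\mathbb{X})$, $u\mapsto P^u f_0$, is continuous, nonvanishing, and takes values in the eigenspaces. This also realizes the triviality of the line bundle mentioned in the footnote, since $P^uf_0$ is a global nonvanishing section over $\mathbb{V}$. Finally, if real-valuedness is wanted for a complex-conjugate pair, the same construction applies to $P^u_{\lambda_i^u}+P^u_{\lambda_{i+1}^u}$ as in Remark~\ref{rem:real-valued}.
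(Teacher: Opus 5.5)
Your proposal is correct and is essentially the paper's argument. The paper gives no separate proof beyond its footnote: it obtains the corollary from Theorem~\ref{thm:convergence_KEP_nonresonant} by shrinking to a neighborhood where the $k$-nonresonant and $k$-spectral spread conditions persist. It then applies the strongly continuous eigenprojection $P^u_{\lambda_i^u}$ to a fixed observable, which yields a nonvanishing continuous section trivializing the eigen-line bundle, as the footnote indicates (the same device as $\psi_i^u=\sum_\ell P^u_{\lambda_\ell^u} f_i$ in the proof of Theorem~\ref{thm:linearization}), and your Steps 1--3 do exactly this. Your closing remark on $P^u_{\lambda_i^u}+P^u_{\lambda_{i+1}^u}$ is unnecessary for the statement, and it produces a real-valued element of the two-dimensional invariant subspace, not an eigenfunction.
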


\subsection{Global Linearization Result} %with a Parameter 

As it is shown that the Koopman principal eigenfunctions provide a diffeomorphism linearizing nonlinear systems \cite{kvalheim2021existence}, it can also be shown that there exists a \emph{parameterized} diffeomorphism linearizing the \emph{parameterized} nonlinear system \eqref{eq:paraDS}. 
\begin{theorem}
\label{thm:linearization}
Consider the system \eqref{eq:paraDS} with Assumptions \ref{asm:state_space}, \ref{asm:continuity_para}, and \ref{asm:GES_FP} and let $1\leq k\leq r$ be arbitrary. 
Assume that for all $u  \in \mathbb{U}$, the eigenvalues of the Jacobian, denoted by $\lambda_1^u ,\ldots, \lambda_n^u $, satisfy the $k$-nonresonant and $k$-spectral spread conditions. 
Then, there exists a diffeomorphism $\psi^u  = (\psi_1^u ,\ldots,\psi_n^u )^\top:\mathbb{X}\to \psi^u  (\mathbb{X})\subset \mathbb{R}^n$ such that ${\rm D}_x \psi^u (0) = I$ and
\begin{equation}
\label{eq:linear_conjugacy}
\frac{d}{dt}\psi^u  (x) = A^u \psi^u (x), \quad \forall x \in \mathbb{X} ,~ \forall t > 0
\end{equation}
holds, where $A^u = {\rm D}_x F^u (0)$. 
Moreover, $\psi_i^u$ is continuous as a function of $u$ in the $C^k (\mathbb{X})$ norm. 
\end{theorem}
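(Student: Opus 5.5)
The plan is to build $\psi^u$ from the Koopman principal eigenfunctions of Corollary~\ref{cor:KEF}, normalized by their derivatives at the origin, and then to show that this normalization preserves continuity in $u$. I will argue locally around an arbitrary $u_0\in\mathbb{U}$. At the end I will remove the dependence on local choices by making the normalization canonical.

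\textbf{Step 1: local continuous eigenfunctions.} Fix $u_0$. By Theorem~\ref{thm:convergence_KEP_nonresonant} and Corollary~\ref{cor:KEF}, there is an open $\mathbb{V}\ni u_0$ and continuous maps $u\mapsto \phi_{\lambda_i^u}^u\in C^k(\mathbb{X})$ for $i=1,\ldots,n$. Concretely, I take $\phi_{\lambda_i^u}^u = P^u_{\lambda_i^u} f_i$ for a fixed test function $f_i$, for instance a linear functional $x\mapsto w_i^{u_0}x$, where $w_i^{u_0}$ is a left eigenvector of $A^{u_0}$. Strong convergence of the eigenprojections then gives $C^k$-continuity.

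Since convergence holds in the $C^k$ norm with $k\ge 1$, the row vectors ${\rm D}_x\phi_{\lambda_i^u}^u(0)$ are continuous in $u$. By Kvalheim--Revzen, each ${\rm D}_x\phi^u_{\lambda_i^u}(0)$ is nonzero and is a left eigenvector of $A^u$ for $\lambda_i^u$, since ${\rm D}\phi(0)A^u=\lambda_i^u{\rm D}\phi(0)$. These rows span $(\mathbb{R}^n)^*$. Stacking them gives an invertible matrix $W^u$, continuous in $u$, with $W^uA^u=\Lambda^uW^u$. If eigenvalues are repeated, $\Lambda^u$ is replaced by the block form obtained from the projections. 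Complex-conjugate pairs are handled with Remark~\ref{rem:real-valued} so that the result is real.

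\textbf{Step 2: normalization.} Set $\Phi^u=(\phi^u_{\lambda_1^u},\ldots,\phi^u_{\lambda_n^u})^\top$ and
\[
\psi^u := (W^u)^{-1}\Phi^u .
\]
Then ${\rm D}_x\psi^u(0)=I$. Moreover,
\[
\frac{d}{dt}\psi^u=(W^u)^{-1}\Lambda^u\Phi^u=(W^u)^{-1}\Lambda^uW^u\psi^u=A^u\psi^u ,
\]
which is \eqref{eq:linear_conjugacy}. Matrix inversion is continuous, so each $\psi_i^u$ is continuous in $u$ in the $C^k$ norm on $\mathbb{V}$.

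That $\psi^u$ is a $C^k$ diffeomorphism onto its image follows from \cite[Proposition~2]{kvalheim2021existence}. The reason is that $\psi^u$ conjugates the flow to $e^{A^ut}$, is a local diffeomorphism at $0$, and is therefore globally injective with invertible derivative. This is done by flowing points into a neighborhood of $0$ and using the conjugacy.

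\textbf{Step 3: globalization, the main obstacle.} The local constructions on different $\mathbb{V}$'s must agree, so that $\psi^u$ is a single continuous function on all of $\mathbb{U}$. The key observation is uniqueness: a $C^k$ linearizing map with ${\rm D}_x\psi(0)=I$ is unique. Suppose $\psi,\tilde\psi$ are two such maps. Then $\tilde\psi\circ\psi^{-1}$ commutes with $e^{A^ut}$ and has derivative $I$ at $0$. Under $k$-nonresonance and spectral spread, the simplicity in Proposition~\ref{prop:kval} forces each component of $\tilde\psi\circ\psi^{-1}$ to be a linear combination of linear eigenfunctions. Hence it is linear, and therefore equal to the identity.

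So $\psi^u$ is independent of the choices of $f_i$ and of $\mathbb{V}$, and local continuity gives continuity on all of $\mathbb{U}$. The delicate points are:
\begin{itemize}
\item tracking possibly coalescing eigenvalues, handled by using only simplicity and the continuity from Remark~\ref{rem:eigenvalue_continuous};
\item making this uniqueness argument rigorous in $C^k$.
\end{itemize}
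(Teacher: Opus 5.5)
Your proof is correct, but it is organized differently from the paper's. The paper works globally from the start. It applies the \emph{total} principal projection $\sum_\ell P^u_{\lambda_\ell^u}$ to the fixed coordinate functions $f_i(x)=x_i$ and sets $\psi^u_i:=\sum_\ell P^u_{\lambda_\ell^u}f_i$. It then uses Lemma~\ref{lem:projection_differential}, ${\rm D}_x(P_{\lambda_i}g)(0)=Q_i^*{\rm D}_xg(0)$, which is proved from the resolvent formula in the proof of Proposition~\ref{prop:resolvent_KO}. That lemma gives ${\rm D}_x\psi^u(0)=I$ with no normalization step, and also identifies $A^u={\rm D}_xF^u(0)$. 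Realness comes from Remark~\ref{rem:real-valued}. The sum of projections does not depend on how the eigenvalues are labeled, so continuity on all of $\mathbb{U}$ follows directly from Theorem~\ref{thm:convergence_KEP_nonresonant}; the paper phrases this as a global trivialization of $\mathbb{E}$.

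You instead take the eigenfunctions of Corollary~\ref{cor:KEF} on a neighborhood $\mathbb{V}$ and normalize them with the locally diagonalizing matrix $W^u$. This is exactly the kind of object the paper's footnote says need not exist globally. You then globalize by a uniqueness argument. Your route needs only the Kvalheim--Revzen fact that ${\rm D}_x\phi(0)$ is a nonzero left eigenvector, not Lemma~\ref{lem:projection_differential}. The price is the local-to-global step. By uniqueness, the two constructions produce the same $\psi^u$.

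A few remarks make your version cleaner:
\begin{itemize}
\item The worry about coalescing eigenvalues is vacuous. Since $k\geq1$, $m=e_j$ is admissible in the $k$-nonresonance condition, so $\lambda_i^u\neq\lambda_j^u$ for $i\neq j$ at every $u$.
\item The globalization is essentially free. By simplicity, two local choices of your $\Phi^u$ differ by $\Phi^u\mapsto M\Phi^u$ with $M$ invertible (a rescaling, possibly with a relabeling). This sends $W^u\mapsto MW^u$ and leaves $(W^u)^{-1}\Phi^u$ unchanged.
\item Uniqueness is easier than passing through $\tilde\psi\circ\psi^{-1}$. The function $w_i(\psi-\tilde\psi)$ is a $\lambda_i^u$-eigenfunction of $L^u_F$ (or zero) with zero differential at $0$, so it vanishes by simplicity. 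Taking $\tilde\psi=\overline{\psi^u}$ in this argument gives realness for free.
\item If you literally use $f_i=w_i^{u_0}x$, the fact that $P^{u_0}_{\lambda_i^{u_0}}f_i\neq0$ itself needs something like the paper's lemma. Choosing $f_i=\phi^{u_0}_{\lambda_i^{u_0}}$, so that $P^{u_0}_{\lambda_i^{u_0}}f_i=f_i$, avoids this.
\end{itemize}
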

{\color{black}\begin{proof}
    See Appendix~\ref{app:1}. 
\end{proof}}

By the transformation $z = \psi^u (x)$, we obtain
\begin{equation}
\label{eq:linearized_z}
    \dot{z} = A^u z,\quad \forall z\in \mathbb{Z}^u , ~ \forall u \in \mathbb{U}, 
\end{equation}
where $\mathbb{Z}^u := \psi^u (\mathbb{X})$. 
This results in the existence of a finite-dimensional representation of the parameterized %a 
nonlinear %dynamical 
system \eqref{eq:paraDS}. 
The representation %with a parameter, which 
is linear in terms of the state but nonlinear in terms of the parameter. 
Furthermore, from the uniqueness of the Koopman eigenfunctions, if two of $\psi_1^u$ and $\psi_2^u$ satisfy \eqref{eq:linear_conjugacy} and ${\rm D}_x \psi_1^u (0) = {\rm D}_x \psi_2^u (0) = I$ for all $u \in\mathbb{U}$, then it follows that $\psi_1^u - \psi_2^u = 0$ for all $u \in \mathbb{U}$, indicating the uniqueness of the Sternberg linearization \cite{sternberg1957local,kvalheim2021existence} with the parameter. 
We also note that for any invertible matrix
\footnote{\color{black}$G^u$ can not be chosen as a matrix diagonalizing ${\rm D}_x F^u(0)$ in general since the vector bundle constructed by attaching the eigenspace of ${\rm D}_x F^u(0)$ to $u \in \mathbb{U}$ can be nontrivial. }
$G^u \in \mathbb{R}^{n\times n}$ which is continuous in $u$, 
$\Psi^u (x) = G^u \psi^u (x)$ also satisfies \eqref{eq:linear_conjugacy} with $A^u = G^u {\rm D}_x F^u(0) (G^u)^{-1}$.

\subsubsection*{Related Work}
We note the connection between Theorem~\ref{thm:linearization} and the Koopman operators for nonlinear %control 
systems with inputs. 
One of the direct and simple formulations of the Koopman operator for a system with input %control 
is to consider an augmented system $\dot{x} = F^u(x)$ and $\dot{u} = 0$, which simplifies the implementation of EDMD with control \cite{proctor2018generalizing,shi2022deep}. 
Our result implies the existence of a continuous map $(x,u) \mapsto \Psi^u (x)$, enabling the finite-dimensional linearization of the system with input (although it will be explained in Section~\ref{sec:4} that this linearization is not suitable for control). 
The continuity result %is finding 
justifies the use of universal approximation theorems {\color{black}(see, e.g., \cite{cybenko1989approximation,lusch2018deep})}, which approximate $\psi^u(x)$ by a large number of continuous functions, and thus guarantees %ensures 
the accuracy and convergence of EDMD. 
In addition, our result theoretically ensures a control strategy for fast convergence towards the GES EP as shown in \cite{banks2023koopman}.

%%%%%
\section{Linearization of Control-Affine Systems}
\label{sec:4}
\subsection{Motivation and Related Work} %Background}
As stated in Section~\ref{sec:1}, we have been motivated by the global linearization of nonlinear systems with inputs. 
Here, let us consider a control-affine system with a time-dependent input $u(t)$ as
\begin{equation}
    \label{eq:affine_eq}
    \dot{x} (t) = F(x(t)) + \sum_{i=1}^d u_i(t) G_i (x(t)),\quad x(t) \in \mathbb{X},
\end{equation}
where $u(t)= (u_1(t),\ldots, u_d(t))^\top \in \mathbb{U} \subset \mathbb{R}^d$. 
While Section~\ref{sec:3} %the previous section 
clarified the existence of a map $\psi^{u(t)}$ that linearizes the parameterized system, % $x(t)$, 
this $\psi^{u(t)}$ depends on $u(t)$, which yields the state equation in terms of the transformed state $z(t) = \psi^{u(t)} (x(t))$, given by 
$$
\dot{z}(t) = A^{u(t)} z(t) + {\rm D}_u \psi^{u(t)} \big( (\psi^{u(t)})^{-1} (z(t)) \big) \dot{u}(t),
$$
where $u(t)$ is assumed to be differentiable in $t$. 
This indicates that the transformation of Theorem~\ref{thm:linearization} is not suitable for control unless $\psi^u(t)$ is independent of $u(t)$. 
This motivates to find %gives us the motivation to find 
a condition such that there exists a linearizing map $\psi$ independent of $u$. 

In previous research, the linearization of the control-affine system %nonlinear control systems 
\eqref{eq:affine_eq} has been studied with %based on 
differential geometry \cite{khalil2002nonlinear} and Lie algebra \cite{elliott2009bilinear}. 
For the case where $F$ and $G_i$ have a common EP%equilibrium point
\footnote{In such a case, geometric control theory yields impossibility of feedback linearization \cite{khalil2002nonlinear}.} (we let be at $\{0\}$ for simplicity), i.e., $F(0)=G_i(0)=0$, \cite{elliott2009bilinear} showed a sufficient condition of local bilinearizability as follows: 
\begin{proposition}[Theorem 7.8 in \cite{elliott2009bilinear}]
\label{prop:lobal_bilinearization}
Assume that {\color{black}$F, G_i \in C^\omega(\mathbb{X},\mathbb{R}^n)$ (analytic vector fields)} and $F(0)=G_i(0) = 0$. 
Let $A:= {\rm D}_x F(0)$, $B_i := {\rm D}_x G_i (0)$, and assume that the eigenvalues of $A$ (possibly unstable) satisfies the $\infty$-nonresonant condition. 
Denote the Lie algebra generated by $F, G_1, \ldots, G_d$ as $\{F,G_1,\ldots, G_d \}_{\mathfrak{L}}$ and the Lie algebra generated by $A, B_1, \ldots, B_d$ as $\{A,B_1,\ldots, B_d \}_{\mathfrak{L}}$. 
If there exists an isomorphism between $\{F,G_1,\ldots, G_d \}_{\mathfrak{L}}$ and $\{A,B_1,\ldots, B_d \}_{\mathfrak{L}}$ in the sense of Lie algebra, then there exist a neighborhood $V$ of $\{0\} \in \mathbb{X}$ and a $C^\omega$ diffeomorphism $\psi:V\to \mathbb{R}^n$ such that 
$$
\frac{d}{dt} \psi(x) = A \psi(x) + \sum_{i=1}^d u_i B_i \psi(x),\quad x \in \mathbb{X}
$$
holds. 
\end{proposition}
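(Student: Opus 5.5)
This result is Theorem 7.8 of \cite{elliott2009bilinear}, quoted here without proof. If I were to reprove it, I would follow the classical Poincaré–Sternberg route and then use the Lie algebra isomorphism to transfer linearity from $F$ to the $G_i$. The three steps are: linearize $F$ alone, show that each $G_i$ becomes linear in the new coordinates, and read off the bilinear form.

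\textbf{Step 1 (linearize $F$).} Since the eigenvalues of $A$ satisfy the $\infty$-nonresonant condition, I would invoke Poincaré's theorem to obtain a formal change of coordinates $\psi$ with $\psi_*F = Ax$ and ${\rm D}_x\psi(0)=I$. This is done order by order: at order $m$ one solves the homological equation $[A x, h_m] = r_m$. The operator $h\mapsto[Ax,h]$ acts diagonally on monomials, with eigenvalues $\sum_j m_j\lambda_j - \lambda_i$, and these never vanish by nonresonance.

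To pass from a formal to an analytic $\psi$ when $A$ has eigenvalues on both sides of the imaginary axis, Poincaré's theorem alone does not suffice. Here I would use the Lie algebra isomorphism hypothesis, presumably as Elliott does. The hypothesis makes $\{F,G_1,\ldots,G_d\}_{\mathfrak L}$ finite-dimensional, and a finite-dimensional Lie algebra of analytic vector fields integrates to a local Lie group action. By Guillemin–Sternberg/Hermann-type linearization results, such an action near a common fixed point is analytically linearizable provided the linear part of the isomorphism, given by $V\mapsto {\rm D}_xV(0)$, is injective. The isomorphism hypothesis is exactly what ensures this.

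\textbf{Step 2 (linearity of each $G_i$).} Write the push-forward as $\tilde G_i = \psi_*G_i = B_i x + \text{h.o.t.}$ Define the map $\Phi: V\mapsto {\rm D}_xV(0)$. It is a Lie algebra homomorphism, because linear parts of brackets of vector fields vanishing at $0$ are brackets of the linear parts, up to sign convention. By hypothesis it is an isomorphism onto $\{A,B_1,\ldots,B_d\}_{\mathfrak L}$.

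Consider the linear vector fields $\ell(V)(x):={\rm D}_xV(0)x$; they form a Lie algebra isomorphic to $\{A,B_i\}_{\mathfrak L}$. Composing, $\ell\circ\Phi$ is a homomorphism from $\{\tilde F,\tilde G_i\}_{\mathfrak L}$ to linear vector fields. It agrees with the identity on $\tilde F = Ax$, and I want to show it agrees on the generators $\tilde G_i$, so that the two Lie algebras coincide.

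Decompose $\tilde G_i=\sum_{m\ge1}g_{i,m}$ into homogeneous parts. The adjoint action ${\rm ad}_{Ax}$ preserves each degree and is diagonalizable with the nonresonant spectrum. Because the algebra is finite-dimensional, ${\rm ad}_{Ax}$ restricted to it has finitely many eigenvalues, and these must be eigenvalues of ${\rm ad}_A$ on $\mathfrak{gl}_n$ (differences $\lambda_i-\lambda_j$), since $\Phi$ is injective and intertwines the adjoint actions. In degree $m\ge2$ the eigenvalues are $\sum m_j\lambda_j-\lambda_i$ with $|m|\ge2$. Nonresonance alone does not forbid these from coinciding with a difference $\lambda_p-\lambda_q$, so there I would argue directly through the kernel: any higher-order component lies in $\ker\Phi$, which is trivial. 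Hence every $g_{i,m}$ with $m\ge2$ vanishes, i.e.\ $\tilde G_i=B_ix$.

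\textbf{Step 3 (conclusion).} In the coordinates $z=\psi(x)$ the control-affine system becomes $\dot z = Az+\sum_i u_iB_iz$, which is the stated identity on the neighborhood $V$. The main obstacle is the convergence (analyticity) of $\psi$ in Step 1 without a Poincaré-domain assumption. I would handle it through the local integration of the finite-dimensional algebra to a group action and an averaging or analytic linearization theorem for that action, rather than through direct majorant estimates.
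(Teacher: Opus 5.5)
\textbf{There is a genuine gap in Step~2, and it cannot be closed.} The paper gives no proof of this proposition; it only cites Elliott. Your argument must therefore stand on its own, and it breaks at Step~2.

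You claim that every component $g_{i,m}$ with $m\ge2$ ``lies in $\ker\Phi$''. Nothing justifies this. The fact $\ker\Phi=0$ concerns elements of $\mathfrak g=\{\tilde F,\tilde G_1,\ldots,\tilde G_d\}_{\mathfrak L}$, and $\mathfrak g$ is not closed under taking homogeneous parts. What you can actually transfer from $\{A,B_1,\ldots,B_d\}_{\mathfrak L}$ is that $\mathfrak g$ splits into ${\rm ad}_{Ax}$-eigenvectors with eigenvalues of the form $\lambda_p-\lambda_q$. Such an eigenvector may contain monomial fields $x_1^{m_1}\cdots x_n^{m_n}e_i$ with $\sum_j m_j\ge2$ exactly when $\sum_j m_j\lambda_j-\lambda_i=\lambda_p-\lambda_q$. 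That is the coincidence you yourself noted nonresonance does not exclude.

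This cannot be patched, because the statement as quoted is false. Take $n=2$, $d=1$, $F(x)=Ax$ with $A={\rm diag}(-1,-2/3)$, and $G(x)=(x_2^2,x_1)^\top$, so that $Bx=(0,x_1)^\top$.
\begin{itemize}
\item The eigenvalues are $\infty$-nonresonant, and even lie in the Poincar\'e domain.
\item We have $[F,G]=-\tfrac13G$ and $[A,B]=\tfrac13B$. Both algebras are the two-dimensional non-abelian Lie algebra, and $V\mapsto-{\rm D}_xV(0)$ is an isomorphism between them.
\item Yet $G$ is a single ${\rm ad}_{Ax}$-eigenvector mixing $x_1e_2$ and $x_2^2e_1$, which share the eigenvalue $\lambda_1-\lambda_2=2\lambda_2-\lambda_1=-1/3$.
\item By nonresonance, any (even formal) $\psi$ with ${\rm D}_x\psi\,F=A\psi$ has the form $\psi(x)=(d_1x_1,d_2x_2)^\top$.
\item Then ${\rm D}_x\psi\,G=B\psi$ reads $(d_1x_2^2,d_2x_1)=(0,d_1x_1)$, which forces $\psi=0$.
\end{itemize}
So an extra hypothesis is indispensable, for instance $\sum_j m_j\lambda_j-\lambda_i\neq\lambda_p-\lambda_q$ for all $i,p,q$ and all $m$ with $\sum_j m_j\ge2$. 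Under that hypothesis your eigen-decomposition argument does show that every element of $\mathfrak g$ is linear. The same example also defeats the identity ${\rm Ker}\bigl(L_F-(\lambda_j+\lambda_k-\lambda_\ell)I\bigr)=\{0\}$ for $j\neq\ell$, which the paper uses in its proof of Theorem~\ref{thm:bilinearization}. Here $\phi_{\lambda_2}^2=x_2^2$ has eigenvalue $2\lambda_1-\lambda_2=2\lambda_2$, and indeed $L_G\phi_{\lambda_1}=x_2^2$.

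\textbf{Step~1 has an independent gap.} There is no general theorem that a finite-dimensional Lie algebra of analytic vector fields with a common zero and injective linear-part map is analytically linearizable, or even formally linearizable. The Hermann, Guillemin--Sternberg and Kushnirenko results require semisimplicity (via Whitehead's lemma), and averaging arguments require a compact group. The solvable example above is already a counterexample at the formal level. Moreover, taking $G_i\equiv0$ makes the isomorphism hypothesis automatic. Your claim would then give analytic linearization of every analytic vector field with nonresonant linear part. In the Siegel domain that is a small-divisor problem, and it fails in general without Diophantine/Bruno-type conditions. Convergence has to come from an explicit assumption such as the Poincar\'e domain, where Poincar\'e's theorem supplies $\psi$ directly.

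\textbf{A minor point.} $\Phi$ is not an isomorphism ``by hypothesis''. It is a surjective (anti-)homomorphism onto $\{A,B_1,\ldots,B_d\}_{\mathfrak L}$. The abstract isomorphism only gives equality of the finite dimensions, and that equality is what makes $\Phi$ bijective.
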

Here, recall that the Lie algebra has a binary operation denoted by $[\cdot,\cdot]$, called Lie bracket, satisfying Jacobi's axioms. 
This bracket is defined by $[A,B] := AB - BA$ for the case that the Lie algebra consists of matrices, whereas $[F,G] f := L_F L_G f - L_G L_F f$ for all $f \in C^r (\mathbb{X})$ for the case that the Lie algebra consists of vector fields. 
Recall also that an isomorphism $\pi:\mathfrak{g} \to \mathfrak{g}'$ of two Lie algebras $\mathfrak{g}$ and $\mathfrak{g}'$ satisfies (i) $\pi (\alpha A + \beta B) = \alpha \pi(A) + \beta \pi(B)$ for all $A, B \in \mathfrak{g}$ and all $\alpha, \beta \in \mathbb{C}$; (ii) $\pi([A,B]) = [\pi(A) ,\pi (B)]$ for all $A, B \in \mathfrak{g}$.

\subsection{Global Bilinearization Result}
Here, we extend the local statement of Proposition~\ref{prop:lobal_bilinearization} to a global one using the Koopman operator. 
We here make the following assumptions: %assume the followings:
\begin{assumption}
\label{asm:control1}
The dynamical system $(\mathbb{X},F)$ is positively invariant and has a GES EP at the origin. 
\end{assumption}
\begin{assumption}
\label{asm:control2}
The vector fields $F,G_1,\ldots,G_d$ are $C^r$ and $G_i(0) = 0$ for all $i\in \{1,\ldots,d\}$. 
\end{assumption}
\begin{theorem}
\label{thm:bilinearization}
For the control-affine system $\dot{x}=F(x)+\sum_{i=1}^{d}uG_i(x)$,  %\eqref{eq:affine_eq}, 
assume Assumptions \ref{asm:control1}, \ref{asm:control2}.
Let $A:= {\rm D}_x F(0)$, $B_i := {\rm D}_x G_i (0)$, and assume that the eigenvalues of $A$ satisfies the $k$-nonresonant and $k$-spectral spread conditions, where $2\leq k \leq r$. 
Denote the Lie algebra generated by $F, G_1, \ldots, G_d$ as $\{F,G_1,\ldots, G_d \}_{\mathfrak{L}}$ and the Lie algebra generated by $A, B_1, \ldots, B_d$ as $\{A,B_1,\ldots, B_d \}_{\mathfrak{L}}$. 
If there exists an isomorphism between $\{F,G_1,\ldots, G_d \}_{\mathfrak{L}}$ and $\{A,B_1,\ldots, B_d \}_{\mathfrak{L}}$, 
we have a diffeomorphism $\psi:\mathbb{X} \to \mathbb{R}^n$ such that $z=\psi(x)$ satisfies 
\begin{equation}
\label{eq:bilinearization}
    \dot{z} = Az + \sum_{i=1}^d u_i B_i z,\quad x \in \mathbb{X}. 
\end{equation}
\end{theorem}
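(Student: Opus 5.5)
Take $\psi$ to be the Sternberg linearization of $F$ alone, i.e.\ Theorem~\ref{thm:linearization} with $u=0$, built from the principal eigenfunctions of Proposition~\ref{prop:kval}. The task is then to show that, in the coordinates $z=\psi(x)$, each $G_i$ becomes the linear field $z\mapsto B_i z$. Concretely, $\psi:\mathbb{X}\to\psi(\mathbb{X})$ is a $C^k$ diffeomorphism with ${\rm D}_x\psi(0)=I$ and $L_F\psi=A\psi$ componentwise. Let $\tilde G_i:=\psi_*G_i$, so that $\tilde G_i(z)={\rm D}_x\psi(x)G_i(x)$ at $z=\psi(x)$. Then \eqref{eq:bilinearization} is equivalent to
\[
L_{G_i}\psi = B_i\psi \quad\text{on } \mathbb{X},\qquad i=1,\ldots,d .
\]
Since $\dot z = {\rm D}_x\psi\,(F+\sum u_iG_i)=A\psi+\sum u_i L_{G_i}\psi$, this identity suffices.

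The Lie algebra hypothesis is used to pin down the commutation relations of the $G_i$ with $F$. Let $\pi:\{F,G_1,\ldots,G_d\}_{\mathfrak L}\to\{A,B_1,\ldots,B_d\}_{\mathfrak L}$ be the isomorphism. Following \cite{elliott2009bilinear}, I would interpret it as the linearization map $\pi(F)=A$, $\pi(G_i)=B_i$, $\pi(X)={\rm D}_xX(0)$; this is the natural reading, and it needs to be fixed explicitly in the proof. Now consider the linear Lie algebra $\{Ax,B_1x,\ldots,B_dx\}_{\mathfrak L}$ of linear vector fields. Bracketing linear fields corresponds to the matrix commutator (up to sign), so there is a bracket-preserving map from $\{F,G_i\}_{\mathfrak L}$ to the linear vector fields, sending $F\mapsto Ax$ and $G_i\mapsto B_ix$.

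The key step is to show $h_i:=L_{G_i}\psi-B_i\psi\equiv 0$, and I would do this with the spectral machinery. Write $\mathrm{ad}_F X := [F,X]$. Using $L_{[F,G_i]}=L_FL_{G_i}-L_{G_i}L_F$ and $L_F\psi=A\psi$, we get
\[
L_F(L_{G_i}\psi)=L_{[F,G_i]}\psi + L_{G_i}(A\psi) = L_{[F,G_i]}\psi + A\,L_{G_i}\psi .
\]
Iterating with $\mathrm{ad}_F$ produces the finite-dimensional space $W$ spanned by the components of $L_X\psi$, with $X$ ranging over the Lie algebra. $W$ is finite-dimensional because, via the isomorphism, the Lie algebra is isomorphic to a subalgebra of $\mathfrak{gl}(n)$. $W$ is $L_F$-invariant, and on it $L_F$ acts through $\mathrm{ad}_A$ together with left multiplication by $A$. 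Hence the eigenvalues of $L_F|_W$ are of the form $\lambda_j+(\lambda_a-\lambda_b)$ and similar combinations.

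The components of $h_i$ then lie in an $L_F$-invariant finite-dimensional subspace of $C^{k-1}(\mathbb{X})$; since $k\ge 2$, this is where we need $L_{G_i}\psi\in C^{k-1}$ with $k-1\ge1$. Moreover $h_i$ vanishes to second order at $0$: we have $h_i(0)=0$ and ${\rm D}_xh_i(0)=B_i-B_i=0$, using $G_i(0)=0$ and ${\rm D}\psi(0)=I$. Any generalized Koopman eigenfunction $\varphi$ of a GES system with $\varphi(0)=0$ and ${\rm D}\varphi(0)=0$ must have eigenvalue with real part at most $2\max{\rm Re}\,\lambda_\ell$, because along trajectories $|\varphi(S_tx)|\lesssim|S_tx|^2$. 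A Proposition~\ref{prop:resolvent_KO}–type exclusion in $C^{k-1}$ should then force $h_i=0$, provided the spectrum of $L_F|_W$ avoids such values.

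The main obstacle is precisely this spectral exclusion. The eigenvalues of $L_F$ on $W$ are combinations like $\lambda_j+\lambda_a-\lambda_b$, which need not be less stable than $2\max{\rm Re}\,\lambda_\ell$. My first attempt would be an approach closer to Elliott's: show $\psi_*X$ is linear for every $X$ in the algebra by induction on a filtration. Here $\psi_*F=Az$ is linear, and $\psi_*G_i-B_iz$ is a field $R_i$ with second-order zero satisfying $[Az,R_i]=$ (bracket relations dictated by $\pi$) $-$ (linear part). The equation $\mathrm{ad}_{Az}R=\dots$ on fields vanishing to order $2$ is uniquely solvable by $k$-nonresonance in jets up to order $k$, and beyond order $k$ by the $k$-spectral-spread decay, as in the uniqueness argument of \cite{kvalheim2021existence}. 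This gives $R_i=0$ globally on $\psi(\mathbb{X})$. Finally, the global diffeomorphism property of $\psi$ comes directly from the $u=0$ case of Theorem~\ref{thm:linearization}.
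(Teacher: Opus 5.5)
Your reduction follows the same outline as the paper's argument. You build $\psi$ from the principal eigenfunctions of $L_F$, and the identity $L_FL_X-L_XL_F=L_{[F,X]}$ places $L_X\psi$ in a finite-dimensional $L_F$-invariant space with eigenvalues of the form $\lambda_j+(\lambda_a-\lambda_b)$. You then identify the linear part by evaluating ${\rm D}_x$ at $0$; your ${\rm D}_xh_i(0)=0$ is the paper's final step $C^i=B_i$.

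But the decisive step, $h_i\equiv 0$, is never established. You concede that the spectral exclusion fails, and the Elliott-style fallback does not repair it. Its claim that the equation for $R_i$ is uniquely solvable ``beyond order $k$ by the $k$-spectral-spread decay'' is wrong. The components of $R_i$ lie in generalized eigenspaces of $L_F$ for the \emph{shifted} values $\lambda_j+\nu$, with $\nu$ an eigenvalue of ${\rm ad}_F$, while the spectral spread condition constrains only $\lambda_1,\ldots,\lambda_n$.

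Concretely, on the closed unit disk take $F=(-x_1,-\tfrac{21}{2}x_2)^\top$, $G=(x_2,\,cx_1^{20})^\top$ and $k=11$. Then $A={\rm diag}(-1,-\tfrac{21}{2})$ satisfies the $11$-nonresonant and $11$-spectral spread conditions, and $B={\rm D}_xG(0)$ has the single nonzero entry $B_{12}=1$. We get $[F,G]=-\tfrac{19}{2}G$ and $[A,B]=\tfrac{19}{2}B$, so both Lie algebras are the two-dimensional non-abelian one, and $X\mapsto-{\rm D}_xX(0)$ is an isomorphism. Here $\psi$ is the identity. The field $R=(0,\,cx_1^{20})^\top\neq 0$ satisfies the dictated relation $[Ax,R]=-\tfrac{19}{2}R$ and has vanishing $11$-jet.

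In fact no $C^1$ diffeomorphism bilinearizes this system when $c\neq 0$. $L_F\psi=A\psi$ forces $\psi_1=ax_1$ with $a\neq 0$. Then $L_G\psi_1=\psi_2$ gives $\psi_2=ax_2$, and $L_G\psi_2=acx_1^{20}\neq 0=(B\psi)_2$.

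So the obstacle you flagged is genuine, and it sits exactly where the paper's proof is thin. The paper decomposes $\{A,B_1,\ldots,B_d\}_{\mathfrak{L}}$ into ${\rm ad}_A$-eigenvectors $M_{k\ell}$ and transports them by an isomorphism normalized only by $\pi(A)=F$. It observes that $L_{\pi(M_{k\ell})}\phi_{\lambda_j}\in{\rm Ker}\bigl(L_F-(\lambda_j+\lambda_k-\lambda_\ell)I\bigr)$. It then asserts, citing $k$-nonresonance, that this kernel is ${\rm span}\{\phi_{\lambda_k}\}$ if $j=\ell$ and $\{0\}$ otherwise. That assertion is precisely the spectral exclusion your argument is missing. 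It does not follow from conditions on $\lambda_1,\ldots,\lambda_n$ once ${\rm Re}(\lambda_j+\lambda_k-\lambda_\ell)$ falls below the spectral-spread threshold.

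The example above admits no isomorphism with $\pi(A)=F$, but a three-dimensional variant does. Take $F=Ax$ with $A={\rm diag}(-1,-\tfrac{32}{3},-\tfrac{61}{3})$, $G_1=(x_2,0,cx_1^{30})^\top$, $G_2=(0,0,x_2)^\top$ and $k=21$. Then $\pi(A)=F$, $\pi(B_1)=G_2$, $\pi(B_2)=G_1$ is an isomorphism. There $L_{G_1}\phi_{\lambda_3}=cx_1^{30}$ is a nonzero element of ${\rm Ker}(L_F+30I)$, with $-30=2\lambda_3-\lambda_2$. An analogous chain of identities rules out any bilinearizing $\psi$.

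Closing the gap therefore needs an additional hypothesis controlling the shifted values $\lambda_j+\nu$. Neither your fallback nor the paper's kernel claim supplies it.
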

{\color{black}\begin{proof}
    See Appendix~\ref{app:1}. 
\end{proof}}

%Note that 
Theorem~\ref{thm:bilinearization} also shows the sufficient condition under which the parameterized Koopman eigenfunctions introduced in Section~\ref{sec:3} are independent of $u$. 
Specifically, letting $F^u := F+\sum_{i=1}^d u_i G_i$ and $\lambda_1^u,\ldots,\lambda_n^u$ be the eigenvalues of ${\rm D}_x F^u (0)$, we have
\begin{equation}
\nonumber
L_F^u \phi_{\lambda_i} = \lambda_i^u \phi_{\lambda_i},\quad \forall i\in \{1,\ldots n\},~\forall u\in \mathbb{U},
\end{equation}
where $\phi_{\lambda_1},\ldots,\phi_{\lambda_n}$ are the Koopman eigenfunctions of $L_F$. 

\subsubsection*{Difference from Proposition~\ref{prop:lobal_bilinearization}}
Since the construction of Proposition~\ref{prop:lobal_bilinearization} is based on the Taylor expansion of the analytic vector fields, the bilinearization is valid only in the neighborhood where the Taylor expansion converges. 
On the other hand, since the construction of Theorem~\ref{thm:bilinearization} is based on the spectral property of the Koopman generator, we obtain the global bilinearization. 
Furthermore, we utilized the algebraic structure of the Koopman generator in this proof. 
We speculate that analyzing the algebraic structure of $L_F,L_{G_1},\ldots, L_{G_d}$ can extend the result on bilinearization to other classes of control systems. 

{\color{black}
\begin{remark}
\label{rem:feedback}
Although Theorem \ref{thm:bilinearization} assumes that the drift vector field $F$ in \eqref{eq:affine_eq} has a GES EP at the origin, this assumption can be relaxed by means of a feedback transformation of the form $u_i=\alpha_i(x)+\sum_{j=1}^d \beta_{ji}(x)v_j$.
More precisely, suppose that there exists a feedback law $u=(\alpha_1(x),\ldots,\alpha_d(x))^\top$ that renders the origin exponentially stable. 
Now consider the input transformation
$u_i=\alpha_i(x)+\sum_{j=1}^d \beta_{ji}(x)v_j$.
Then Theorem \ref{thm:bilinearization} suggests that a feedback bilinearization
$\dot{z}=A'z+\sum_{i=1}^d v_i B_i' z$
can be achieved provided that one can choose $\beta_{ji}:\mathbb{X}\to\mathbb{R}$ so that the Lie algebra
$$\textstyle
\left\{ F+\sum_{i=1}^d \alpha_i G_i, ~\sum_{j=1}^d \beta_{j1}G_j, \ldots, ~\sum_{j=1}^d \beta_{jd}G_j \right\}_{\mathfrak L}
$$
is isomorphic to $\{A',B_1',\ldots,B_d'\}_{\mathfrak L}$, where 
$A' := D_x (F+\sum_{i=1}^d \alpha_i G_i)(0)$ and $B_i':=D_x(\sum_{j=1}^d \beta_{ji} G_j)(0)$.
\end{remark}

As indicated in Remark \ref{rem:feedback}, we believe that our bilinearization result can be extended to a class of systems for which the origin is globally exponentially stabilizable. This would allow one, for example, to apply optimal control techniques for bilinear systems.
}

\subsubsection*{Related Work}
Bilinearization based on the Koopman operator has been extensively studied{\color{black}: see, e.g., \cite{goswami2021bilinearization,peitz2020data,bruder2021advantages}}. % a lot. 
Particularly, EDMD with control has been studied to find a map $\psi:\mathbb{X} \to \mathbb{C}^N$ into the form 
$$\dot{z}=Az + \sum_{i=1}^d u_i B_i z ,$$
where $\psi$ is independent of $u$. 
The validity of this approach was analyzed in \cite{goswami2021bilinearization}, which showed that a sufficient condition for the existence of a finite-dimensional bilinear representation is the existence of a so-called Koopman invariant subspace $\mathcal{S}\subset \mathcal{F}$, which is invariant under all the actions of $L_F, L_{G_1},\ldots,L_{G_d}$. 
The Lie-algebraic condition in Theorem~\ref{thm:bilinearization} is relatively constructive since it is expressed directly for %in terms of 
the underlying vector fields. 
We also speculate that the condition is intrinsic to the control-affine system %more fundamental 
since it preserves the Lie-algebraic structure studied in the traditional nonlinear control theory \cite{elliott2009bilinear}.
It is also worth noting that the Lie-algebraic condition in Theorem~\ref{thm:bilinearization} can be explicitly and directly verified for given vector fields, whereas verifying the existence of a Koopman invariant subspace is a challenging issue. 

{\color{black}
\section{An Example}
Consider the following two-dimensional dynamical system with a parameter (or input) $u$ and a parameter $a\in \mathbb{R}$:
\begin{equation}
\label{eq:ex1}
\left\{
\begin{aligned}
    \dot{x}_1 &= -x_1,\\
    \dot{x}_2 &= (-1+u)x_2 + (a+u)x_1^2,
\end{aligned}
\right.
\quad
\begin{aligned}
    (x_1,x_2) &\in \mathbb{X}\subset \mathbb{R}^2,\\
    u &\in (-\infty,1).
\end{aligned}
\end{equation}
Here, $\mathbb{X}$ is chosen so that $0\in\mathbb{X}$ and $\mathbb{X}$ is positively invariant.
We define the vector fields $F(x_1,x_2):= (-x_1,-x_2+ax_1^2)^\top$ and $G(x_1,x_2):= (0,x_2+x_1^2)^\top$
so that \eqref{eq:ex1} can be written as
$\dot{x} = F(x)+uG(x)$. 

When $u$ is constant, the origin is GES EP, and the Jacobian matrix at the origin has eigenvalues $-1$ and $-1+u$.
In this case, the $\infty$-nonresonance
\footnote{Since the system is $C^\infty$, $k$ in Definition \ref{def:k-nonresonant} can be taken as $\infty$. Moreover, when $k=\infty$, Definition \ref{def:spectral_spread} implies that the spectral spread condition is unnecessary.}
condition requires that there exist no $m_1, ~m_2\in\mathbb{N}_{\geq 1}$ such that $-1 = m_2(-1+u)$ and $-1+u = m_1(-1)$, 
equivalently,
{
\setlength{\abovedisplayskip}{4pt}
\setlength{\belowdisplayskip}{4pt}
$$
u \in (-\infty,1) \setminus \left( \left\{0,1/2,2/3,3/4,\ldots \right\} \cup \left\{0,-1,-2,\ldots\right\} \right).
$$
}%

According to Corollary \ref{cor:KEF}, there exist two principal eigenfunctions as long as the $\infty$-non\-resonance condition is satisfied.
Indeed, one can verify that
$$\phi_{-1}(x_1,x_2):=x_1,\quad\phi_{-1+u}(x_1,x_2):=
x_2+\frac{a+u}{1+u}x_1^2$$
satisfy $L_{F+uG}\phi_{-1} = -\phi_{-1}$ and $L_{F+uG}\phi_{-1+u} = (-1+u)\phi_{-1+u}$, 
provided that $u\neq -1$.
Hence, these functions are Koopman eigenfunctions, and they depend continuously on $u$ in the $C^k$ norm for any $k \in \mathbb{N}_{\geq 1}$.
When $u=-1$, the function $\phi_{-1+u}$ is not well defined because the denominator $1+u$ vanishes; this is consistent with the resonance at $u=-1$.
Moreover, the diffeomorphism $\psi^u:\mathbb{X}\to\mathbb{R}^2$ defined by
$\psi^u(x):= (\phi_{-1}(x),\phi_{-1+u}(x))^\top$
gives the coordinate transformation $z=\psi^u(x)$ which yields the linearized system
{
\setlength{\abovedisplayskip}{4pt}
\setlength{\belowdisplayskip}{4pt}
$$
\dot{z}= \begin{pmatrix}
-1 & 0 \\ 0 & -1+u
\end{pmatrix}z.
$$
}%
This is also consistent with Theorem \ref{thm:linearization}. 

Now, we regard $u$ as an input. 
A direct calculation gives $[F,G] = (0,(a-1)x_1^2)^\top$ and $[D_xF(0),D_xG(0)]=0$. 
Hence, when $a=1$, we have $[F,G]=0$, and therefore the condition in Theorem \ref{thm:bilinearization} is satisfied.
In this case, the diffeomorphism $\psi(x_1,x_2)= (x_1,x_2+x_1^2)^\top$ yields the bilinearized system
{
\setlength{\abovedisplayskip}{4pt}
\setlength{\belowdisplayskip}{4pt}
$$
\dot{z} = \begin{pmatrix}
    -1 & 0 \\ 0 & -1
\end{pmatrix} z + u(t) \begin{pmatrix}
    0 & 0 \\ 0 & 1
\end{pmatrix} z, 
$$
}%
where $u(t)$ can depend on $t$. 
}

%%%%%
\section{Conclusion}
\label{sec:5}
In this paper, we introduced the Koopman operator for a parameterized dynamical system with %that has 
a globally exponentially stable equilibrium point, and we analyzed the parameter-dependence of its eigenfunctions. 
As a main result, we obtained a global linearization that depends continuously on the parameter. 
We then investigated the conditions under which the transformation that provides the linearization becomes independent of the parameter for control-affine systems. 
This results in a global bilinearization that is independent of the parameter. 
Finally, since we focused on the system with GES EP in this paper, which is somewhat restricted, %Focusing on systems with a GES EP is somewhat conservative, and 
one of our future directions is to extend the present results to broader classes of systems.

\section*{Acknowledgement}
The authors thank the anonymous reviewers for their careful reading and valuable comments and suggestions.
The authors used ChatGPT (OpenAI) to assist in improving the clarity and readability of the English text in the Introduction section of this paper. All AI-generated suggestions were reviewed and edited by the authors to ensure accuracy and integrity. 

\appendices
\section{Collection of Proofs}
\label{app:1}
\subsection{Proof of Proposition \ref{prop:resolvent_KO}}
\begin{proof}
Here, denote $\omega := k\cdot {\rm max}_{1\leq \ell \leq n} {\rm Re} \, \lambda_\ell$. 
The assumption (i) ensures the existence of $\epsilon>0$ such that ${\rm Re}\,\zeta - \epsilon > \omega$. 
Let $\tilde{\mathcal{F}}$ be a closed subspace of $C^k(\mathbb{X})$ given by
\begin{equation}
\label{eq:closed_subspace}
\tilde{\mathcal{F}} := \{ f\in C^k(\mathbb{X}) ~|~ \forall i \in \{1,\ldots,k\} ,~{\rm D}_x^i f (0) = 0 \}. 
\end{equation}
Then, according to \cite[Lemmas 5]{kvalheim2021existence}, it can be shown that there exists an adopted small ball $B \subset \mathbb{X}$ containing $\{0 \}$ such that this ball is strictly positively invariant and
$$
\begin{aligned}
&\exists M > 0 \quad {\rm such ~ that} \\ &\forall f|_B \in \tilde{\mathcal{F}}|_{B}, ~\|{\rm e}^{-(\zeta - \epsilon) t} U_t f|_B \|_{C^k(B)} \leq M \|f|_B \|_{C^k(B)}
\end{aligned}
$$
holds\footnote{While \cite{kvalheim2021existence} showed that ${\rm e}^{-\omega t} U_t$ is a strictly contraction map with any $t>0$ under the Banach space $\tilde{\mathcal{F}}|_{B}$, we obtain from this idea that $\{ {\rm e}^{-\omega t} U_t \}_{t\geq 0}$ is a contraction semigroup. }, 
where $\tilde{\mathcal{F}}|_{B} := \{ f|_B \in C^k(B)~|~ f \in  \tilde{\mathcal{F}} \}$. 
According to Hille-Yosida theorem (see, e.g., \cite{kato2013perturbation}), the generator of ${\rm e}^{-(\zeta - \epsilon) t} U_t|_B$, which is given by $(L_F - (\zeta - \epsilon) I)|_B$, has a property that any $\theta > 0$ belongs to the resolvent set of $(L_F - (\zeta - \epsilon) I)|_B$, implying that $(L_F - (\zeta - \epsilon) I - \theta I)^{-1}|_B$ exists and is bounded. 
By choosing $\theta = \epsilon$, we obtain that $(L_F - \zeta I)^{-1}|_B$ exists and is bounded. 

Next, consider the existence and uniqueness of the solution $f \in C^k(\mathbb{X})$ of the linear equation 
\begin{equation}
    \label{eq:inverse_resolvent}
    (\zeta I - L_F) f = g, 
\end{equation}
for any given $g \in C^k (\mathbb{X})$. 
If the existence and uniqueness are proven, then $(\zeta I - L_F)$ has inverse with domain $C^k(\mathbb{X})$, implying $\zeta \in \rho({\color{black} L_F})$ \cite[Chap. 3, Problem 6.1]{kato2013perturbation}. 
According to \cite[Lemmas 1 and 4]{kvalheim2021existence}, the assumption (ii) implies that there uniquely exist a $k$-th order polynomial $p$ and $r\in \tilde{\mathcal{F}}$ such that 
\begin{equation}
\label{eq:polynomial}
\zeta p(x) -  {\rm D}_x p (x) F(x) = g(x) + r(x)
\end{equation}
holds. 
The operation $r|_B \mapsto (L_F - \zeta I)^{-1}|_B r|_B$ is bounded according to the previous consideration. 
Since the system contains the GES EP and $\mathbb{X}$ is compact, there exists $T>0$ such that for any $x\in \mathbb{X}$, $S_T(x) \in B$. 
Here, define $V_T:C^k(B) \to C^k(\mathbb{X})$ by 
$$
(V_T f|_B) (x) :=  {\rm e}^{-\zeta T} f|_B (S_T(x)), ~~\forall f|_B \in C^k(B), ~ \forall x \in \mathbb{X}. 
$$
This operator is well-defined and is bounded according to \cite{irwin1972smoothness}. 
Therefore, the operation $r|_B \mapsto V_T (L_F - \zeta I)^{-1}|_B r|_B$ is well-defined. 
Furthermore, it can be seen that $(L_F - \zeta I) V_T f|_B = V_T (L_F - \zeta I)|_B f|_B$ holds for all $f|_B \in C^k(B)$. 
Now, we define
$$
f = p + V_T (L_F - \zeta I)^{-1}|_B \, r|_B - \int_0^T {\rm e}^{-\zeta t} U_t r dt .
$$
Then, we have
$$
(\zeta I - L_F) p = g+r,\qquad \qquad \qquad \qquad \qquad \qquad \quad ~~ 
$$
$$
\begin{aligned}
&(\zeta I - L_F ) V_T (L_F - \zeta I)^{-1}|_B \, r|_B \\ &\qquad = V_T (\zeta I - L_F )|_B (L_F - \zeta I)^{-1}|_B \, r|_B = - V_T r|_B, 
\end{aligned}
$$
and
$$
\begin{aligned}
&(\zeta I - L_F) \int_0^T {\rm e}^{-\zeta t} U_t r dt = \int_0^T (\zeta I - L_F) {\rm e}^{-\zeta t}  U_t r dt \\
&\qquad = \int_0^T \frac{\partial}{\partial t} \left( -{\rm e}^{-\zeta t} U_t r \right) dt 
= r - {\rm e}^{-\zeta T} U_T r \\
&\qquad = r - V_T r|_B, 
\end{aligned}
$$
which yields
$$
(\zeta I - L_F ) f = g.
$$
This implies that $f$ is the solution of \eqref{eq:inverse_resolvent}, completing the proof. 
\end{proof}

\subsection{Proof of Theorem \ref{thm:resolvent_KO_para}}
\begin{proof}
First, Proposition \ref{prop:resolvent_KO} and the continuity of the Koopman eigenvalues (Remark \ref{rem:eigenvalue_continuous}) imply $\zeta \in \rho(L_{F}^u)$ for all $u \in \bar{B}^{\mathbb{R}^d}(u_0,\delta)$ with sufficiently small $\delta>0$. 

Recall the proof of Proposition \ref{prop:resolvent_KO} and consider the solution of the linear equation
\begin{equation}
    \label{eq:inverse_resolvent_para}
    (\zeta I - L_F^u ) f^u = g, 
\end{equation}
for any given $g \in C^k (\mathbb{X})$.  
As shown in the proof of Proposition \ref{prop:resolvent_KO}, there uniquely exists a $k$-th order polynomial $p^u$ such that
\begin{equation}
\label{eq:polynomial_para}
\zeta -  {\rm D}_x p^u (x) F(x) = g(x) + r^u(x) 
\end{equation}
holds, where $r^u := g - (\zeta I - L_F^u) p^u$ is the remainder. 
This $p^u$ linearly depends on $g$ and is determined by the Taylor coefficients of $g$ up to $k$-th order \cite[Lemma 4]{kvalheim2021existence}. 
Then, according to the perturbation theory for finite-dimensional vector space, the $k$-nonresonant condition ensures the strong convergence $p^u \to p^{u_0}$ in the $C^k(\mathbb{X})$ norm, yielding the strong convergence $r^u \to r^{u_0}$ as well. 

According to the proof of Proposition \ref{prop:resolvent_KO}, 
$$
f^{u_0} = p^{u_0} + V_T^{u_0} (L_F^{u_0} - \zeta I)^{-1}|_B \, r^{u_0}|_B - \int_0^T {\rm e}^{-\zeta T} U_T^{u_0} r^{u_0} dt 
$$
is the solution of \eqref{eq:inverse_resolvent_para}, where $V_T^{u_0} := {\rm e}^{-\zeta T} U^{u_0}_T$. 
By taking $\delta>0$ sufficiently small, for all $u \in \bar{B}^{\mathbb{R}^d} (u_0, \delta)$, $B$ remains positively invariant and $S_T^u(\mathbb{X}) \subset B$ follows. 
This fact and Theorem \ref{thm:scKO} imply that $V_T^{u}$ and $\int_0^T {\rm e}^{-\zeta T} U_T^u dt$ strongly converges as $u \to u_0$. 
Therefore, if the strong convergence of $(L_F^{u} - \zeta I)^{-1}|_B$ is shown, we obtain the strong convergence of
$$
f^{u} = p^{u} + V_T^{u} (L_F^{u} - \zeta I)^{-1}|_B \, r^{u}|_B - \int_0^T {\rm e}^{-\zeta T} U_T^{u} r^{u} dt 
$$
to $f^{u_0}$, implying the strong convergence of $R(\zeta, L_F^u)$. 

Then we show the strong convergence of $(L_F^{u} - \zeta I)^{-1}|_B$. 
By taking $\delta>0$ sufficiently small, for all $u \in \bar{B}^{\mathbb{R}^d} (u_0, \delta)$, $\{ {\rm e}^{-\omega^u t} U_t^u \}_{t\geq 0}$ is a contraction semigroup, where $\omega^u := k\cdot \max_{1\leq \ell \leq n} {\rm Re}\, \lambda_\ell^u$. 
Furthermore, Theorem \ref{thm:scKO} implies the strong convergence of ${\rm e}^{-\zeta t} U_t^u$ to ${\rm e}^{-\zeta t} U_t^{u_0}$ for any $t\geq 0$. 
Therefore, according to \cite[Theorem 2.16, Chap. 9]{kato2013perturbation}, $(L_F^{u} - \zeta I)^{-1}|_B$ strongly converges to $(L_F^{u_0} - \zeta I)^{-1}|_B$. 
\end{proof}

\subsection{Proof of Theorem \ref{thm:linearization}}
Before the proof of Theorem \ref{thm:linearization}, we have the following Lemma.
\begin{lemma}
\label{lem:projection_differential}
Consider the system \eqref{eq:DS_nopara} with a GES EP and let $L_F$ be the Koopman generator defined by \eqref{eq:generator} with $\mathcal{F} = C^k(\mathbb{X})$. 
Let $\lambda_i$ be the Koopman principal eigenvalues with $i\in \{ 1,\ldots,n\}$ and $P_{\lambda_i}$ be the associated eigenprojection. 
Additionally, let $Q_i^*$ be the dual operator of an eigenprojection of the Jacobian ${\rm D}_x F (0)$ associated with $\lambda_i$. 
If $\lambda_i$ satisfies $k$-nonresonant and $k$-spectral spread condition, then 
\begin{equation}
\label{eq:projection_differential}
L_D (P_{\lambda_i} g) = Q_i^* {\rm D}_x g(0) ,\quad g \in C^k(\mathbb{X})
\end{equation}
holds, where $L_D:{C}^k (\mathbb{X}) \to (\mathbb{C}^n)^*$ is a linear bounded operator defined by $L_D g := {\rm D}_x g(0)$. 
\end{lemma}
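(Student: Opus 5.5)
We prove \eqref{eq:projection_differential} by showing that $L_D$ intertwines the generator with the transposed Jacobian, and then pushing this relation through the resolvent integral \eqref{eq:integral_resolvent}. Write $A := {\rm D}_x F(0)$ and let $Q_i$ be the Riesz projection of $A$ onto its (generalized) eigenspace for $\lambda_i$. Then $Q_i^*$ acts on $(\mathbb{C}^n)^*$ by $\ell \mapsto \ell Q_i$, and it is the Riesz projection of the dual operator $A^*:\ell\mapsto \ell A$.

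\emph{Step 1 (intertwining).} For $f \in \mathcal{D}(L_F)$, differentiate $L_F f(x) = {\rm D}_x f(x)\cdot F(x)$ at $x=0$. Since $F(0)=0$, the term ${\rm D}_x^2 f(0)(F(0),\cdot)$ vanishes, and we get
$$L_D L_F f = {\rm D}_x f(0)\, A = A^* L_D f.$$
Here we need ${\rm D}_x(L_F f)(0)$ to make sense, which holds for $f\in\mathcal{D}(L_F)$ because $L_F f\in C^k(\mathbb{X})$ with $k\ge 1$. Boundedness of $L_D$ on $C^k(\mathbb{X})$ is immediate from the norm \eqref{eq:norm}.

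\emph{Step 2 (resolvents).} Take $\zeta$ on the circle $\gamma$ around $\lambda_i$ used in \eqref{eq:integral_resolvent}. By Proposition~\ref{prop:resolvent_KO} and Remark~\ref{rem:isolated_eigenvalue}, for $\gamma$ small enough $\gamma\subset\rho(L_F)$. Since the $\lambda_\ell$ are themselves of the form $\sum m_j\lambda_j$ with $\sum m_j=1$, the circle also lies in $\rho(A^*)$, and it encloses only the eigenvalue $\lambda_i$ of $A$. Here we use the nonresonance condition to exclude any other $\lambda_\ell=\lambda_i$, or else interpret $Q_i$ as the projection for the value $\lambda_i$.

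Set $f=R(\zeta,L_F)g$, so that $f\in\mathcal{D}(L_F)$ and $(L_F-\zeta)f=g$. Applying $L_D$ and Step 1 gives $(A^*-\zeta)L_D f = L_D g$, that is,
$$L_D R(\zeta,L_F) g = R(\zeta,A^*) L_D g.$$

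\emph{Step 3 (integration).} Since $L_D$ is bounded, it commutes with the contour integral. Therefore
$$L_D P_{\lambda_i} g = -\tfrac{1}{2\pi{\rm i}}\int_\gamma R(\zeta,A^*)L_D g\,{\rm d}\zeta,$$
and the right-hand side is exactly the Riesz projection of $A^*$ at $\lambda_i$ applied to ${\rm D}_x g(0)$, namely $Q_i^*{\rm D}_x g(0)$.

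The main obstacle is Step 2: one must ensure that a single contour $\gamma$ lies in both resolvent sets and isolates $\lambda_i$ for both $L_F$ and $A^*$. This is where the $k$-nonresonant and $k$-spectral spread conditions enter, through Proposition~\ref{prop:resolvent_KO}: condition (i) holds near $\lambda_i$ by the spectral spread, and condition (ii) holds on a punctured neighborhood by the finiteness of the lattice combinations with $\sum m_j\le k$. A secondary point is justifying the differentiation at $0$ in Step 1 for general domain elements, which follows directly from $k\ge1$.
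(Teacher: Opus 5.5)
Your proof is correct, but it takes a different route from the paper's.

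\textbf{The paper's route.} The paper does not use the intertwining relation on all of $\mathcal{D}(L_F)$. It reuses the explicit construction of the resolvent from the proof of Proposition~\ref{prop:resolvent_KO}:
\begin{itemize}
\item it writes $g=(\zeta I-L_F)p_\zeta-r_\zeta$, with $p_\zeta$ a $k$-th order polynomial and $r_\zeta\in\tilde{\mathcal{F}}$;
\item it asserts that every $r_\zeta$-dependent piece of $(\zeta I-L_F)^{-1}g$ stays in $\tilde{\mathcal{F}}$ and is therefore annihilated by $L_D$; these are the term $V_T(L_F-\zeta I)^{-1}|_B r_\zeta|_B$ and the finite-time integral $\int_0^T {\rm e}^{-\zeta t}U_t r_\zeta\,dt$;
\item only then does it apply $L_D L_F p_\zeta = A^* L_D p_\zeta$, to the polynomial part alone, giving $L_D p_\zeta=(\zeta I-A^*)^{-1}L_D g$;
\item it finishes with the same contour integral as your Step~3.
\end{itemize}

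\textbf{Comparison.} Your version gets $L_D R(\zeta,L_F)=R(\zeta,A^*)L_D$ in one line for every $\zeta\in\rho(L_F)\cap\rho(A)$, whatever the structure of the resolvent. So it is shorter and more general. It also avoids the invariance claims about $\tilde{\mathcal{F}}$, which the paper states with ``it can be shown'' but does not prove.

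The price is that you need ${\rm D}_x(L_F f)(0)={\rm D}_x f(0)A$ for a general domain element with only $C^k$ regularity. The paper needs this only for polynomials, where it is trivial.

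\textbf{One small fix in Step~1.} For $k=1$, your product-rule justification through ${\rm D}_x^2 f(0)$ is not available, since $f$ is only $C^1$. Instead use that $L_F f\in C^1(\mathbb{X})$ for $f\in\mathcal{D}(L_F)$, and compute its derivative at $0$ along directions: ${\rm D}_x(L_F f)(0)v=\lim_{h\to 0}{\rm D}_x f(hv)\,F(hv)/h={\rm D}_x f(0)Av$. This uses only continuity of ${\rm D}_x f$, $F(0)=0$, and $F\in C^1$.

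\textbf{Step~2 is easier than you suggest.} $\sigma(A)$ is finite, and $P_{\lambda_i}$ does not depend on the radius of $\gamma$ once $\gamma\subset\rho(L_F)$ isolates $\lambda_i$. So you can simply shrink $\gamma$ until it also isolates $\lambda_i$ in $\sigma(A^*)=\sigma(A)$; there is no need to route this through condition (ii) of Proposition~\ref{prop:resolvent_KO}. The $k$-nonresonance condition is needed only to rule out $\lambda_\ell=\lambda_i$ for $\ell\neq i$, as you note.
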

{\color{black}
\begin{proof}
We first study $R(\zeta,L_F) g = (L_F - \zeta I)^{-1}g$ for any given $g\in C^k (\mathbb{X})$ and any $\zeta \in \rho (L_F)$. 
By virtue of \eqref{eq:polynomial} in the proof of Proposition \ref{prop:resolvent_KO}, there exist a $k$-th order polynomial $p_\zeta$ and a residual $r_\zeta \in \tilde{F}$ (recall the definition of $\tilde{F}$ from \eqref{eq:closed_subspace}) such that $g$ can be represented by 
\begin{equation}
    \label{eq:lemma}
g = (\zeta I - L_F) p_\zeta - r_\zeta, 
\end{equation}
where both $p_\zeta$ and $r_\zeta$ depend on $\zeta$. 
Furthermore, according to the proof of Proposition \ref{prop:resolvent_KO}, we have
$$
(\zeta I - L_F)^{-1} g = p_\zeta + V_T (L_F - \zeta I)^{-1}|_B r_\zeta|_B - \int_0^T {\rm e}^{-\zeta t} U_t r_\zeta dt,
$$
where $V_T$ and $B$ are defined in the proof of Proposition \ref{prop:resolvent_KO}. 

By \eqref{eq:integral_resolvent}, we have
$$
P_{\lambda_i} g = \frac{-1}{2\pi {\rm i}} \int_{\gamma} R(\zeta, L_F) g {d}\zeta = \frac{1}{2\pi {\rm i}} \int_{\gamma} (\zeta I - L_F)^{-1} g {d}\zeta,
$$
where $\zeta$ is an complex integral curve enclosing only $\lambda_i$. 
Since $L_D:C^k (\mathbb{X}) \to (\mathbb{C}^n)^*$ is bounded, it follows 
$$
\begin{aligned}
L_D P_{\lambda_i} g &= \frac{1}{2\pi {\rm i}} L_D \int_{\gamma} (\zeta I - L_F)^{-1} g {d}\zeta \\
&= \frac{1}{2\pi {\rm i}} \int_{\gamma} L_D (\zeta I - L_F)^{-1} g {d}\zeta \\
&= \frac{1}{2\pi {\rm i}} \int_{\gamma} \Bigl[ L_D p_\zeta + L_D V_T (L_F - \zeta I)^{-1}|_B r_\zeta|_B \\ &\qquad\qquad~~ -L_D {\textstyle \int_0^T} {\rm e}^{-\zeta t} U_t r_\zeta dt \Bigr] {d}\zeta. 
\end{aligned}
$$
Now, for any $r\in \tilde{F}$, it can be shown that $(L_F - \zeta I)^{-1}|_B r|_B \in \tilde{F}|_B$, $V_T r \in \tilde{F}$, ${\textstyle \int_0^T} {\rm e}^{-\zeta t} U_t r \in \tilde{F}$, and that $L_D r = 0$. 
Therefore, we have 
\begin{equation}
\label{eq:lemma2}
L_D P_{\lambda_i} g = \frac{1}{2\pi {\rm i}} \int_{\gamma} L_D p_\zeta {d}\zeta.
\end{equation}
Next, applying $L_D$ to the both sides of \eqref{eq:lemma}, we have
$$
L_D g = (\zeta I - {\rm D}_x F (0)^* ) L_D p_\zeta 
$$
Here, we used 
$$\begin{aligned}
L_D L_F p_\zeta &= {\rm D}_x \bigl[ {\rm D}_x p_\zeta \cdot F \bigr]|_{x=0} = {\rm D}_x p_\zeta (0) {\rm D}_x F(0)\\ &= {\rm D}_x F(0)^* L_D p_\zeta     
\end{aligned}
$$
together with $L_D r_\zeta = 0$. 
Substituting this into \eqref{eq:lemma2}, we obtain
$$
\begin{aligned}
\frac{1}{2\pi {\rm i}} \int_{\gamma} L_D p_\zeta {d}\zeta &= \frac{-1}{2\pi {\rm i}} \int_{\gamma} ({\rm D}_x F (0)^* - \zeta I)^{-1} L_D g {d}\zeta  \\
&= \frac{-1}{2\pi {\rm i}} \int_{\gamma} R(\zeta,{\rm D}_x F (0)^*) L_D g {d}\zeta  \\ 
&= Q_i^*  L_D g = Q_i^* {\rm D}_xg(0)
\end{aligned}
$$
Combining this with \eqref{eq:lemma2} completes the proof. 
\end{proof}
}

We now proceed the proof of Theorem \ref{thm:linearization}. 

\begin{proof}
Here we denote $\sum_\ell P_{\lambda_\ell^u}^u := (P_{\lambda_1^u}+ \cdots + P_{\lambda_n^u})$. 
Define $\mathbb{E}^u = {\rm Range}\,\sum_\ell P_{\lambda_\ell^u}^u$ and $\mathbb{E} := \bigsqcup_{u \in \mathbb{U}} \{u \} \times \mathbb{E}^u$.
Furthermore, define $\Phi^u : \mathbb{E}^u \to (\mathbb{R}^n)^*$ by $\Phi^u (f):= {\rm D}_x f(0)$ and $\Phi:\mathbb{E} \to \mathbb{U} \times (\mathbb{R}^n)^*$ by $\Phi(u,f) := (u,{\rm D}_x f(0))$. 
Then, $\Phi^{-1} : \mathbb{U} \times (\mathbb{R}^n)^*  \to \mathbb{E}$ exists and is given by $\Phi^{-1} (u, (a_1, \ldots, a_n)) = (u, \sum_\ell P_{\lambda_\ell^u}^u (a_1 f_1 + \cdots + a_n f_n))$, where $f_i(x) := x_i$ for each $i\in \{1,\ldots, n\}$. 
From this, we see that $\Phi^u$ is a vector space isomorphism. 
Furthermore, according to Theorem \ref{thm:convergence_KEP_nonresonant}, the components of $\Phi$ are continuous in terms of $u$ in the $C^k(\mathbb{X})$ norm. 
Therefore, $\Phi:\mathbb{E} \to \mathbb{U}\times (\mathbb{R}^n)^*$ is a global trivialization of $\mathbb{E}$, implying $\mathbb{E}$ is a trivial bundle\cite[Chap. 10]{Lee_smooth}. 
Especially, defining $\psi^u_i := \sum_\ell P_{\lambda_\ell^u}^u f_i$ for each $i\in \{1,\ldots,n\}$, $(\psi_1^u, \ldots, \psi_n^u)$ is a global frame of $\mathbb{E}$\cite[Example 10.17]{Lee_smooth}, which indicates that $\{\psi_1^u, \ldots, \psi_n^u\}$ are linearly independent for all $u\in \mathbb{U}$. 
This and the invariance of $\mathbb{E}^u$ under the action of $L_F^u$ yield the existence of $A^u = \{a_{ij}^u \} \in \mathbb{R}^{n\times n}$ such that 
$$
L_{F}^u \psi_i^u (x) = a_{i1}^u \psi_1^u (x) + \cdots + a_{in}^u \psi_n^u (x) ,\quad \forall x\in \mathbb{X}, ~\forall u \in \mathbb{U}, 
$$
and 
$$
\frac{d}{dt} \psi^u (x) = A^u \psi^u (x) \quad \forall x \in \mathbb{X}, ~\forall u \in \mathbb{U}
$$
hold, where $\psi^u (x):= (\psi_1^u (x),\ldots, \psi_n^u (x))^\top$. 
We also see from Lemma \ref{lem:projection_differential} that ${\rm D}_x \psi^u (0) = I$ and from Remark \ref{rem:real-valued} that $\psi^u$ is real-valued. 
Moreover, Theorem \ref{thm:convergence_KEP_nonresonant} ensures that the components of $\psi^u$ are continuous in terms of $u$ in the $C^k (\mathbb{X})$ norm. 
The statement that $\psi^u$ is diffeomorphism is from \cite[Proposition 2]{kvalheim2021existence}. 

Finally, we show $A^u = {\rm D}_x F^u(0)$. 
The above deduction gives 
$$
\begin{aligned}
L_F^u \psi_i^u &= L_{F^u}{\textstyle \sum_\ell} P_{\lambda_{\ell}^u}^u f_i = (\lambda_1^u P_{\lambda_1^u}^u + \cdots + \lambda_n^u P_{\lambda_n^u}^u) f_i\\
&= a_{i1}^u {\textstyle \sum_\ell} P_{\lambda_{\ell}^u}^u f_1 + \cdots + a_{in}^u {\textstyle \sum_\ell} P_{\lambda_{\ell}^u}^u f_n . 
\end{aligned}
$$
Acting $L_D$ of Lemma \ref{lem:projection_differential} to the above equation, we obtain 
$$
\begin{aligned}
&(\lambda_1^u (Q_1^u)^* + \cdots + \lambda_n^u (Q_n^u)^* ) {\rm D}_x f_i (0) \\ 
&=  a_{i1}^u {\textstyle \sum_\ell} (Q_\ell^u)^* {\rm D}_x f_1 (0) + \cdots + a_{in}^u {\textstyle \sum_\ell}(Q_\ell^u)^* {\rm D}_x f_n (0)  \\
&=  a_{i1}^u {\rm D}_x f_1 (0) + \cdots + a_{in}^u {\rm D}_x f_n(0) \\
&=  ({\rm D}_x f_1 (0),\ldots, {\rm D}_x f_n(0)) (a_{i1}^u, \ldots, a_{in}^u)^\top
\end{aligned}
$$
where $(Q_i^u)^*$ is the dual operator of the eigenprojection of ${\rm D}_x F^u (0)$ associated with $\lambda_i^u$. 
Since the eigendecomposition of $({\rm D}_x F^u(0))^*$ yields $({\rm D}_x F^u(0))^* = \lambda_1^u (Q_1^u)^* + \cdots + \lambda_n^u (Q_n^u)^*$ and since ${\rm D}_x f_1 (0),\ldots, {\rm D}_x f_n(0)$ are the standard basis of $(\mathbb{R}^n)^*$, we have 
%$$\big({\rm D}_x F(0) \big)^* ({\rm D}_x f_1 (0), \ldots , {\rm D}_x f_n (0)) = ({\rm D}_x f_1 (0), \ldots , {\rm D}_x f_n (0)) A^\top$$
%and 
$A^u={\rm D}_x F^u(0)$. 
\end{proof}

\subsection{Proof of Theorem \ref{thm:bilinearization}}

\begin{proof}
Denote the Koopman principal eigenvalues of $L_F$ by $\lambda_1,\ldots, \lambda_n$ and the associated principal eigenfunctions by $\phi_{\lambda_1},\ldots,\phi_{\lambda_n}$. 
We first show that for any $X\in \{F,G_1,\ldots, G_d \}_{\mathfrak{L}}$ and any $j\in \{1,\ldots,n\}$, $L_X \phi_{\lambda_j} \in {\rm span} \{\phi_{\lambda_1},\ldots,\phi_{\lambda_n}\}$. 
Define the adjoint operator in terms of $A$ for the Lie subalgebra $\{A,B_1,\ldots, B_d \}_{\mathfrak{L}}$ by ${\rm ad}_A M := [A, M]$ for any $M\in \{A,B_1,\ldots, B_d \}_{\mathfrak{L}}$, and that in terms of $F$ for the Lie subalgebra $\{F,G_1,\ldots, G_d \}_{\mathfrak{L}}$ by ${\rm ad}_F X := [F, X]$ for any $X\in \{F,G_1,\ldots, G_d \}_{\mathfrak{L}}$. 
According to \cite[Proposition 2.1]{elliott2009bilinear}, the linear operator ${\rm ad}_A : \{A,B_1,\ldots, B_d \}_{\mathfrak{L}} \to \{A,B_1,\ldots, B_d \}_{\mathfrak{L}}$ has a point spectrum, which must consist of $\{ \lambda_k - \lambda_\ell~|~k,\ell \in \{1,\ldots,n \} \}$ where $\lambda_1,\ldots, \lambda_n$ are the eigenvalues of $A$. 
We introduce the eigenvector\footnote{While we assumed that $\lambda_k - \lambda_\ell$ is simple for simplicity, the proof can be generalized for the case that $\lambda_k - \lambda_\ell$ has a multiplicity. } associated with $\lambda_k - \lambda_\ell$ by $M_{k\ell}$. 
Let $\pi: \{A,B_1,\ldots, B_d \}_{\mathfrak{L}} \to \{F,G_1,\ldots, G_d \}_{\mathfrak{L}}$ be the isomorphism such that $\pi(A) = F$. 
Then we have 
$$
\begin{aligned}
&{\rm ad}_F \pi (M_{k\ell}) = [F,\pi (M_{k\ell})] 
= [\pi(A), \pi (M_{k\ell})] \\
&\qquad= \pi([A,M_{k\ell}]) = \pi({\rm ad}_A (M_{k\ell})) = \pi((\lambda_k-\lambda_\ell)M_{k\ell}) \\
&\qquad = (\lambda_k-\lambda_\ell) \pi (M_{k\ell})
\end{aligned}
$$
(indicating that $\pi(M_{k\ell})$ is the eigenvector of ${\rm ad}_F$), or, 
$$
(L_F L_{\pi (M_{k\ell})} - L_{\pi (M_{k\ell})} L_F )f = (\lambda_k-\lambda_\ell) L_{\pi (M_{k\ell})} f
$$
for all $f\in C^k(\mathbb{X})$. 
Setting $f=\phi_{\lambda_j}$, we have
$$
\big(L_F - (\lambda_j + \lambda_k - \lambda_\ell)I\big) L_{\pi (M_{k\ell})} \phi_{\lambda_j} = 0. 
$$
Since the eigenvalues of $A$ satisfy the $k$-nonresonant condition with $k\geq 2$, it follows
$$
{\rm Ker} \big(L_F - (\lambda_j + \lambda_k - \lambda_\ell)I\big) = \left\{ \begin{alignedat}{2}
    &{\rm span} \{ \phi_{\lambda_k}\} &\quad {\rm if}~j=\ell \\
    & \{0\} & \quad {\rm otherwise}
\end{alignedat} \right.
$$
which results in
$$
L_{\pi (M_{k\ell})} \phi_{\lambda_j} \in \left\{ \begin{alignedat}{2}
    &{\rm span} \{ \phi_{\lambda_k}\} &\quad {\rm if}~j=\ell~ \\
    &\{ 0 \} & \quad {\rm otherwise}.
\end{alignedat} \right.
$$
Since the eigenvectors $M_{k\ell}$ span $\{A,B_1,\ldots, B_d \}_{\mathfrak{L}}$ and $\pi$ is an isomorphism, 
$\pi(M_{k\ell})$ span $\{F,G_1,\ldots, G_d \}_{\mathfrak{L}}$. This implies that for any $X \in \{F,G_1,\ldots, G_d \}_{\mathfrak{L}}$ and any $j\in \{1,\ldots,n\}$, $L_X \phi_{\lambda_j} \in {\rm span} \{\phi_{\lambda_1},\ldots,\phi_{\lambda_n}\}$. 

Here, let $P\in \mathbb{C}^{n\times n}$ be an invertible matrix such that $AP =P{\rm diag}\{\lambda_1,\ldots, \lambda_n\}$ and define $\psi = (\psi_1,\ldots,\psi_n)^\top = P(\phi_{\lambda_1},\ldots, \phi_{\lambda_n})^\top$.
Then, the previous consideration implies the existence of matrices $C^i = \{c^i_{jk}\} \in \mathbb{C}^{n\times n}$ such that 
$$ \begin{aligned}
L_{G_i} \psi_j (x) &=  {\rm D}_x \psi_j (x) \cdot G_i (x) \\
&=c^i_{j1} \psi_1 (x) + \cdots + c^i_{jn} \psi_n (x). 
\end{aligned}
$$
for all $i=1,\ldots,n$. 
Differentiating in $x$ and substituting $x=0$, we have
$$
({\rm D}_x G_i(0))^* {\rm D}_x \psi_j (0) = c^i_{j1} {\rm D}_x \psi_1 (0) + \cdots + c^i_{jn} {\rm D}_x \psi_n (0), 
$$
where ${\rm D}_x \psi_1 (0), \ldots , {\rm D}_x \psi_n (0)$ span $(\mathbb{R}^n)^*$. 
This implies $(C^i)^* = ({\rm D}_x G_i(0))^* = (B_i)^*$. 
Finally, acting $L_{F+\sum_{i=1}^d u_i G_i}$ to $\psi$, we obtain \eqref{eq:bilinearization}. 
\end{proof}

\bibliographystyle{IEEEtran}
\bibliography{Refs}

@article{eldering2018global,
  title={Global linearization and fiber bundle structure of invariant manifolds},
  author={Eldering, Jaap and Kvalheim, Matthew and Revzen, Shai},
  journal={Nonlinearity},
  volume={31},
  number={9},
  pages={4202},
  year={2018},
  publisher={IOP Publishing}
}

@book{mauroy2020koopman,
  title={{The Koopman Operator in Systems and Control: Concepts, Methodologies, and Applications}},
  editor={Mauroy, Alexandre and Mezi{\'c}, Igor and Susuki, Yoshihiko},
  year={2020},
  publisher={Springer}
}

@article{mezic2020spectrum,
  title={Spectrum of the {K}oopman operator, spectral expansions in functional spaces, and state-space geometry},
  author={Mezi{\'c}, I.},
  journal={Journal of Nonlinear Science},
  volume={30},
  number={5},
  pages={2091--2145},
  year={2020},
  publisher={Springer}
}

@article{mezic2013analysis,
  title={Analysis of fluid flows via spectral properties of the {K}oopman operator},
  author={Mezi{\'c}, I.},
  journal={Annual Review of Fluid Mechanics},
  volume={45},
  pages={357--378},
  year={2013},
  publisher={Annual Reviews}
}

@article{kvalheim2021existence,
  title={Existence and uniqueness of global {K}oopman eigenfunctions for stable fixed points and periodic orbits},
  author={Kvalheim, M. D. and Revzen, S.},
  journal={Physica D: Nonlinear Phenomena},
  volume={425},
  pages={132959},
  year={2021},
  publisher={Elsevier}
}

@article{mezic2005spectral,
  title={Spectral properties of dynamical systems, model reduction and decompositions},
  author={Mezi{\'c}, Igor},
  journal={Nonlinear Dynamics},
  volume={41},
  pages={309--325},
  year={2005},
  publisher={Springer}
}

@book{Lee_smooth,
  title={{Introduction to Smooth Manifolds}},
  author={Lee, John M},
  year={2012},
  publisher={Springer}
}

@article{brunton2016koopman,
  title={Koopman invariant subspaces and finite linear representations of nonlinear dynamical systems for control},
  author={Brunton, Steven L and Brunton, Bingni W and Proctor, Joshua L and Kutz, J Nathan},
  journal={PloS one},
  volume={11},
  number={2},
  pages={e0150171},
  year={2016},
  publisher={Public Library of Science San Francisco, CA USA}
}

@book{kato2013perturbation,
  title={{Perturbation Theory for Linear Operators}},
  author={Kato, Tosio},
  volume={132},
  year={2013},
  publisher={Springer Science \& Business Media}
}

@article{liu2023non,
  title={On the non-existence of immersions for systems with multiple omega-limit sets},
  author={Liu, Zexiang and Ozay, Necmiye and Sontag, Eduardo D},
  journal={IFAC-PapersOnLine},
  volume={56},
  number={2},
  pages={60--64},
  year={2023},
  publisher={Elsevier}
}

@article{kvalheim2023linearizability,
  title={Linearizability of flows by embeddings},
  author={Kvalheim, Matthew D and Arathoon, Philip},
  journal={arXiv preprint arXiv:2305.18288},
  year={2023}
}

@article{lusch2018deep,
  title={Deep learning for universal linear embeddings of nonlinear dynamics},
  author={Lusch, Bethany and Kutz, J Nathan and Brunton, Steven L},
  journal={Nature communications},
  volume={9},
  number={1},
  pages={4950},
  year={2018},
  publisher={Nature Publishing Group UK London}
}

@article{susuki2021koopman_resolvent,
  title={{Koopman resolvent: A Laplace-domain analysis of nonlinear autonomous dynamical systems}},
  author={Susuki, Yoshihiko and Mauroy, Alexandre and Mezic, Igor},
  journal={SIAM Journal on Applied Dynamical Systems},
  volume={20},
  number={4},
  pages={2013--2036},
  year={2021},
  publisher={SIAM}
}

@article{irwin1972smoothness,
  title={On the smoothness of the composition map},
  author={Irwin, MC},
  journal={The Quarterly Journal of Mathematics},
  volume={23},
  number={2},
  pages={113--133},
  year={1972},
  publisher={Oxford University Press}
}

@phdthesis{banks2023koopman,
  title={Koopman Representations in Control},
  author={Banks, Michael James},
  year={2023},
  school={University of California, Santa Barbara}
}

@article{haseli2023modeling,
  title={{Modeling nonlinear control systems via Koopman control family: Universal forms and subspace invariance proximity}},
  author={Haseli, Masih and Cort{\'e}s, Jorge},
  journal={arXiv preprint arXiv:2307.15368},
  year={2023}
}

@article{guo2025learning,
  title={{Learning parametric Koopman decompositions for prediction and control}},
  author={Guo, Yue and Korda, Milan and Kevrekidis, Ioannis G and Li, Qianxiao},
  journal={SIAM Journal on Applied Dynamical Systems},
  volume={24},
  number={1},
  pages={744--781},
  year={2025},
  publisher={SIAM}
}

@article{sternberg1957local,
  title={{Local contractions and a theorem of Poincar{\'e}}},
  author={Sternberg, Shlomo},
  journal={American Journal of Mathematics},
  pages={809--824},
  year={1957},
  publisher={JSTOR}
}

@book{khalil2002nonlinear,
  title={{Nonlinear Systems}},
  author={Khalil, Hassan K},
  volume={3},
  year={2002},
  publisher={Prentice-Hall, Upper Saddle River, NJ.}
}

@article{proctor2018generalizing,
  title={Generalizing {K}oopman theory to allow for inputs and control},
  author={Proctor, Joshua L and Brunton, Steven L and Kutz, J Nathan},
  journal={SIAM Journal on Applied Dynamical Systems},
  volume={17},
  number={1},
  pages={909--930},
  year={2018},
  publisher={SIAM}
}

@article{shi2022deep,
  title={{Deep Koopman operator with control for nonlinear systems}},
  author={Shi, Haojie and Meng, Max Q-H},
  journal={IEEE Robotics and Automation Letters},
  volume={7},
  number={3},
  pages={7700--7707},
  year={2022},
  publisher={IEEE}
}

@inproceedings{susuki2024koopman,
  title={Koopman Resolvents of Nonlinear Discrete-Time Systems: Formulation and Identification},
  author={Susuki, Yoshihiko and Mauroy, Alexandre and Drma{\v{c}}, Zlatko},
  booktitle={2024 European Control Conference (ECC)},
  pages={627--632},
  year={2024},
  organization={IEEE}
}

@book{elliott2009bilinear,
  title={{Bilinear Control Systems: Matrices in Action}},
  author={Elliott, David LeRoy},
  volume={169},
  year={2009},
  publisher={Springer}
}

@article{goswami2021bilinearization,
  title={{Bilinearization, reachability, and optimal control of control-affine nonlinear systems: A Koopman spectral approach}},
  author={Goswami, Debdipta and Paley, Derek A},
  journal={IEEE Transactions on Automatic Control},
  volume={67},
  number={6},
  pages={2715--2728},
  year={2021},
  publisher={IEEE}
}

@article{strasser2025overview,
  title={{An overview of Koopman-based control: From error bounds to closed-loop guarantees}},
  author={Str{\"a}sser, Robin and Worthmann, Karl and Mezi{\'c}, Igor and Berberich, Julian and Schaller, Manuel and Allg{\"o}wer, Frank},
  journal={arXiv preprint arXiv:2509.02839},
  year={2025}
}

@article{proctor2016dynamic,
  title={Dynamic mode decomposition with control},
  author={Proctor, Joshua L and Brunton, Steven L and Kutz, J Nathan},
  journal={SIAM Journal on Applied Dynamical Systems},
  volume={15},
  number={1},
  pages={142--161},
  year={2016},
  publisher={SIAM}
}

@article{peitz2020data,
  title={{Data-driven model predictive control using interpolated Koopman generators}},
  author={Peitz, Sebastian and Otto, Samuel E and Rowley, Clarence W},
  journal={SIAM Journal on Applied Dynamical Systems},
  volume={19},
  number={3},
  pages={2162--2193},
  year={2020},
  publisher={SIAM}
}

@article{williams2015data,
  title={A data--driven approximation of the koopman operator: {Extending dynamic mode decomposition}},
  author={Williams, Matthew O and Kevrekidis, Ioannis G and Rowley, Clarence W},
  journal={Journal of Nonlinear Science},
  volume={25},
  number={6},
  pages={1307--1346},
  year={2015},
  publisher={Springer}
}

@article{gadginmath2024data,
  title={{Data-driven feedback linearization using the Koopman generator}},
  author={Gadginmath, Darshan and Krishnan, Vishaal and Pasqualetti, Fabio},
  journal={IEEE Transactions on Automatic Control},
  volume={69},
  number={12},
  pages={8844--8851},
  year={2024},
  publisher={IEEE}
}

@article{liu2025properties,
  title={{Properties of immersions for systems with multiple limit sets with implications to learning Koopman embeddings}},
  author={Liu, Zexiang and Ozay, Necmiye and Sontag, Eduardo D},
  journal={Automatica},
  volume={176},
  pages={112226},
  year={2025},
  publisher={Elsevier}
}

@article{cybenko1989approximation,
  title={Approximation by superpositions of a sigmoidal function},
  author={Cybenko, George},
  journal={Mathematics of control, signals and systems},
  volume={2},
  number={4},
  pages={303--314},
  year={1989},
  publisher={Springer}
}

@article{bruder2021advantages,
  title={{Advantages of bilinear Koopman realizations for the modeling and control of systems with unknown dynamics}},
  author={Bruder, Daniel and Fu, Xun and Vasudevan, Ram},
  journal={IEEE Robotics and Automation Letters},
  volume={6},
  number={3},
  pages={4369--4376},
  year={2021},
  publisher={IEEE}
}

\end{document}